\theoremstyle{plain}
\newtheorem{theorem}{Theorem}[section]
\newtheorem{proposition}[theorem]{Proposition}
\newtheorem{corollary}[theorem]{Corollary}
\newtheorem{lemma}[theorem]{Lemma}
\theoremstyle{definition}
\theoremstyle{remark}
\newtheorem{remark}[theorem]{Remark}
\newcommand{\note}[2][\null]{%
  \marginpar{\renewcommand{\baselinestretch}{1}\vspace{-1em}\hrule\vspace{3pt}%
  \scriptsize\raggedright\textsf{#2\ifx#1\null\else\\\hfill--- 
  {\em #1}\fi}\vspace{1.5em}}%
}
\numberwithin{equation}{section}
\begin{document}

\title[Convolution Operators On Symmetric Spaces]{Surjectivity of Convolution Operators on Noncompact Symmetric Spaces}

\author{Fulton Gonzalez}
\address{Department of Mathematics,
Tufts University,
Medford, MA 02155}
\email{fulton.gonzalez@tufts.edu}
\author{Jue Wang}
\address{Department of Mathematics,
Tufts University,
Medford, MA 02155}
\email{jue.wang@tufts.edu}
\author{Tomoyuki Kakehi}
\address{Institute of Mathematics,
University of Tsukuba,
Tsukuba, Ibaraki, 305-8571, Japan}
\email{kakehi@math.tsukuba.ac.jp}


\subjclass[2010]{Primary 43A85; Secondary 43A90, 22E46}
%

\begin{abstract}
Let $\mu$ be a $K$-invariant compactly supported distribution on a noncompact Riemannian symmetric space $X=G/K$. If the spherical Fourier transform $\widetilde\mu(\lambda)$ is slowly decreasing, it is known that the right convolution operator $c_\mu\colon f\mapsto f*\mu$ maps $\mathcal E(X)$ onto $\mathcal E(X)$. In this paper, we prove the converse of this result. We also prove that $c_\mu$ has a fundamental solution if and only if $\widetilde\mu(\lambda)$ is slowly decreasing.
\end{abstract}

\maketitle

\def\rar{\rightarrow} 
\def\fk{\mathfrak}
\def\e{\mathfrak e}
\def\g{\mathfrak g} 
\def\k{\mathfrak k} 
\def\a{\mathfrak a} 
\def\m{\mathfrak m} 
\def\n{\mathfrak n}
\def\p{\mathfrak p} 
\def\u{\mathfrak u} 
\def\t{\mathfrak t} 
\def\h{\mathfrak h} 
\def\z{\mathfrak z}
\def\d{\mathfrak d}
\def\s{\mathfrak s}
\def\Exp{\text{Exp}}
\def\dbar{\overline\delta}
\def\Ad{\text{Ad}}
\def\ad{\text{ad}}
\def\rr{\mathbb R}
\def\rn{\mathbb R^n}
\def\cn{\mathbb C^n}
\def\nm{\nonumber}
\def\cc{\mathbb C}
\def\zz{\mathbb Z}
\def\67{67}
\def\be{\begin{equation}}
\def\ee{\end{equation}}

\def\Bbb{\mathbb}
\def\Cal{\mathcal}

\section{Slowly Decreasing Functions}

The notion of a slowly decreasing function was introduced by L. Ehrenpreis  in 1960 in connection with the following problem.

Let $\mu$ be a fixed distribution in $\mathcal E'(\rr^n)$. Under what conditions on $\mu$ is the convolution operator
$$
c_\mu\colon f\mapsto f*\mu
$$
surjective as a map from $\mathcal E(\rr^n)$ to $\mathcal E(\rr^n)$, or from $\mathcal D'(\rr^n)$ to $\mathcal D'(\rr^n)$? 

This problem was also studied by B. Malgrange \cite{Malgrange1955} for these and other function and distribution spaces. The aim was to generalize related results obtained by the two authors  when $\mu=D\,\delta_0$, or more generally when $\mu=\sum_j D_j\,\delta_{x_j}$, where $D$ and the $D_j$ are constant coefficient differential operators, and $\{x_j\}$ is a finite set of points in $\rr^n$. (Here $\delta_x$ is the delta distribution at $x$.)

By definition, a function $F$ on $\cc^n$ is \emph{slowly decreasing} provided that there exists a constant $A>0$ such that for any $\xi\in\rr^n$, the open ball in $\cc^n$ centered at $\xi$, with radius $A\,\log (2+\|\xi\|)$ contains a point $\zeta$ for which
\begin{equation}\label{E:slow-decrease-cond}
|F(\zeta)|> (A+\|\xi\|)^{-A}.
\end{equation}

Let us recall that the Paley-Wiener Theorem for distributions in $\rr^n$ states that the Fourier transform $\mu\mapsto \mu^*$ is a linear bijection from the vector space of distributions supported in the closed ball $\overline B_R$ of radius $R$ centered at $0\in\rr^n$ onto the vector space of entire functions on $\cc^n$ of exponential type $R$ and which are slowly (i.e., polynomially) increasing on $\rr^n$. These are precisely the entire functions $F$ on $\cc^n$ for which there is an integer $N\in\zz^+$ and a constant $C$ such that
\begin{equation}\label{E:PW-estimate}
|F(\zeta)|\leq C\,(1+\|\zeta\|)^N\,e^{R\,\|\text{Im}\,\zeta\|},\qquad \zeta\in\cc^n.
\end{equation}
Suppose that $F$ is entire in $\cc^n$ and satisfies the Paley-Wiener estimate \eqref{E:PW-estimate}. If $F$ happens to be slowly decreasing, then in fact the point $\zeta$ satisfying \eqref{E:slow-decrease-cond} can be found in $\rr^n$. (The constant $A$ may need to be increased, but this does not change the condition of slow decrease.) This is a consequence of Theorem 5 in \cite{Ehrenpreis1955}, which Ehrenpreis calls a ``minimum modulus theorem.'' 

In the fourth paper in his series on ``Solutions of Some Problems of Division," published in 1960, Ehrenpreis proved the following result.

\begin{theorem}\label{T:Ehrenpreis60} (\cite{Ehrenpreis1960}, Section 2.) Let $\mu\in\mathcal E'(\rr^n)$. Then the following are equivalent:
\begin{enumerate}[(a)]
\item The Fourier transform $\mu^*(\zeta)$ is slowly decreasing.
\item The convolution operator $c_\mu$ maps $\mathcal E(\rr^n)$ onto $\mathcal E(\rr^n)$.
\item The convolution operator $c_\mu$ maps $\mathcal D'(\rr^n)$ onto $\mathcal D'(\rr^n)$.
\item There is a distribution $S\in\mathcal D'(\rr^n)$ such that $S*\mu=\delta_0$.
\item\label{I:invertible-cond} The linear map $f*\mu\mapsto f$ is continuous from $c_\mu(\mathcal D(\rr^n))$ to $\mathcal D(\rr^n)$.
\end{enumerate}
%
\end{theorem}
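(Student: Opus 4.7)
The plan is to pass everything through the Fourier transform so that each statement about $c_\mu$ becomes a statement about multiplication by the entire function $\mu^*$; I would then establish the cycle $(a)\Rightarrow(d)\Rightarrow(c)\Rightarrow(b)\Rightarrow(a)$ and treat $(a)\Leftrightarrow$~\eqref{I:invertible-cond} separately.

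For $(a)\Rightarrow(d)$, I would construct a fundamental solution $S$ as the inverse Fourier transform of a suitable tempered realization of $1/\mu^*$. The minimum modulus theorem mentioned in the excerpt upgrades slow decrease to a real-variable bound: there is a covering of $\rr^n$ by balls of logarithmic radius on which $|\mu^*(\xi)|$ enjoys the polynomial lower bound \eqref{E:slow-decrease-cond}. Subordinating a partition of unity to this covering and invoking a H\"ormander-style division procedure (cutting off near the real zeros of $\mu^*$ and patching) produces a tempered distribution $T$ with $T\cdot\mu^*=1$; its inverse Fourier transform $S\in\mathcal D'(\rr^n)$ satisfies $S\ast\mu=\delta_0$.

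For $(d)\Rightarrow(c)$ and $(d)\Rightarrow(b)$, I would solve $f\ast\mu=g$ by exhaustion and patching. Pick $U_1\Subset U_2\Subset\cdots\nearrow\rr^n$ and cutoffs $\chi_j$ of compact support with $\chi_j\equiv 1$ on $U_j+\overline B_r$, where $r$ bounds $\text{supp}\,\mu$. Then $\chi_j g$ is compactly supported, $f_j:=S\ast(\chi_j g)$ is a well-defined distribution (smooth when $g$ is smooth), and $f_j\ast\mu=g$ on $U_j$. A Mittag-Leffler type correction by local solutions of the homogeneous equation $h\ast\mu=0$ (adjusting $f_{j+1}-f_j$ on $U_j$) stabilizes the sequence on each compact set and yields a global $f$ with $f\ast\mu=g$. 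The implications $(b)\Rightarrow(a)$ and $(c)\Rightarrow(a)$ then follow from the open mapping theorem applied to the Fr\'echet-space surjection $c_\mu$: the resulting quantitative continuity of the set-theoretic inverse, tested against Paley-Wiener representatives of plane waves $e^{i\langle\cdot,\zeta\rangle}$, recovers the lower bound on $\mu^*$ defining slow decrease. Finally, $(a)\Leftrightarrow$~\eqref{I:invertible-cond} amounts, via the Paley-Wiener theorem, to the continuity of the division $F\cdot\mu^*\mapsto F$ on the Paley-Wiener space, which is exactly the content of Ehrenpreis's minimum modulus theorem.

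The main obstacle is the synthesis of $S$ in $(a)\Rightarrow(d)$: the local reciprocals of $\mu^*$ provided by slow decrease must be glued into a single tempered distribution while preserving global estimates, and it is precisely the quantitative control in \eqref{E:slow-decrease-cond} that permits this gluing. Once $S$ is in hand, the remaining implications reduce to a blend of cutoff bookkeeping, the open mapping theorem, and the Paley-Wiener dictionary.
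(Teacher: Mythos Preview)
The paper does not give its own proof of this theorem: it is stated with the attribution ``(\cite{Ehrenpreis1960}, Section 2.)'' and used as a black box throughout. So there is no paper-proof to compare your proposal against.

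That said, the paper does drop two hints about the mechanism behind the theorem that are worth noting against your sketch. First, immediately after the statement it remarks that the key to surjectivity on $\mathcal E(\rr^n)$ is that slow decrease is equivalent to the division property ``$T^*/\mu^*$ entire $\Rightarrow$ $T^*/\mu^*$ slowly increasing on $\rr^n$,'' combined with the fact that such a quotient is automatically of exponential type. This is a somewhat different emphasis from your $(a)\Rightarrow(d)$ step: rather than building a tempered realization of $1/\mu^*$ directly, the classical route solves $c_\mu$ by duality, showing that the adjoint on $\mathcal E'(\rr^n)$ is injective with weak*-closed range, and the closed-range part is precisely the division statement above. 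Second, the paper later invokes H\"ormander's singular-support characterization (Lemma~\ref{T:slowdecrease1}) as the working form of ``not slowly decreasing,'' which is the tool it actually uses in its own arguments.

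Your outline is a plausible roadmap, but the $(a)\Rightarrow(d)$ step as you describe it is where the real work hides, and your sketch underplays the difficulty: ``a H\"ormander-style division procedure'' producing a tempered $T$ with $T\cdot\mu^*=1$ is not a routine partition-of-unity argument, because $\mu^*$ can have real zeros and $1/\mu^*$ is genuinely not a function near them. The actual constructions (Ehrenpreis, H\"ormander) either avoid inverting $\mu^*$ globally and instead use the division property on $\mathcal E'$ plus functional analysis, or build $S$ via a limiting process with carefully chosen contour shifts. Your Mittag-Leffler patching for $(d)\Rightarrow(b),(c)$ and the open-mapping argument for $(b)\Rightarrow(a)$ are on the right track.
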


Following Ehrenpreis, we will call a distribution $\mu$ \emph{invertible} if it satisfies any of the equivalent conditions in Theorem \ref{T:Ehrenpreis60}.
(The term is appropriate because of the last condition above.) 

We call the distribution $S$ in (d) a \emph{fundamental solution} to $c_\mu$. Theorem \ref{T:Ehrenpreis60} clearly implies the existence of fundamental solutions to constant coefficient differential operators on $\rr^n$, the well-known Malgrange-Ehrenpreis Theorem.

As to condition (c), the key to the proof of the surjectivity of $c_\mu$ for $\mathcal E(\rr^n)$ is the fact that the slow decrease condition is equivalent to the condition that if $T$ is a distribution  in $\mathcal E'(\rr^n)$ such that $T^*/\mu^*$ is entire, then it is slowly increasing on $\rr^n$. Since $T^*/\mu^*$ is necessarily of exponential type (\cite{Malgrange1955}, Theorem 1 or \cite{EhrenpreisMP}, Theorem 4), it must be the Fourier transform of a distribution $S\in\mathcal E'(\rr^n)$. 

There is yet another equivalent condition for $\mu$ to be invertible, due to H\"ormander: the ``propagation'' of singular supports. This is given in Lemma \ref{T:slowdecrease1}, and is crucial to the proof of one of main theorems, Theorem \ref{T:W-surjectivity}.

It is natural to try to consider the analogue of Theorem \ref{T:Ehrenpreis60} to Riemannian symmetric spaces of the noncompact type, at least in the $K$-invariant case. This is the aim of the present paper. As a corollary to one of our main results, we will obtain the existence of fundamental solutions to invariant differential operators on such spaces, a fact which was first proved by Helgason in 1964 (\cite{Helgason1964}).

\section{Notation and Preliminaries}
\subsection{Euclidean Fourier Transforms}

The Fourier transform for appropriate functions $F$ on $\rr^n$ will be denoted by $F^*$:
$$
F^*(\xi)=\int_{\rr^n} f(x)\,e^{-i\langle x,\xi\rangle}\,dx, \qquad \xi\in\rr^n.
$$
The Fourier transform of a compactly supported distribution $S$ in $\rr^n$ will likewise be denoted by $S^*$:
$$
S^*(\zeta)=\int_{\rr^n} e^{-i\langle x,\zeta\rangle}\,dS(x),\qquad\zeta\in\cc^n.
$$
We say that an entire function $\Psi$ on $\cc^n$ is \emph{of exponential type} $A\geq 0$ if there is a constant $C$ such that $|\Psi(\zeta)|\leq C\,e^{A\,\|\zeta\|}$ for all $\zeta\in\cc^n$. The \emph{Paley-Wiener Theorem for functions} states that the Fourier transform is a linear bijection from $\mathcal D(\rr^n)$ onto the vector space of entire functions $F$ on $\cc^n$ of exponential type which are rapidly decreasing on $\rr^n$. These are precisely the entire functions $F$ for which there exists an $A\geq 0$ such that
$$
\sup_{\zeta\in\cc^n} |F(\zeta)|\,(1+\|\zeta\|)^N\,e^{-A\,\|\text{Im}\,\zeta\|}<\infty,\qquad N\in\mathbb Z^+.
$$
(See \cite{Ho2}, Theorem 16.3.10 for a further exposition on ``exponential type.'') 
We can topologize the range $\mathcal D(\rr^n)^*$ so that the Fourier transform is a homeomorphism. There are more ``intrinsic'' characterizations of this topology on $\mathcal D(\rr^n)^*$, which are in fact used to prove Theorem \ref{T:Ehrenpreis60}. See, for example, \cite{Ehrenpreis1956}, Theorem 1, or \cite{Helgason2011}, Theorem 4.9.

\subsection{Test Function and Distribution Spaces on Manifolds.}
The topology of $\mathcal E(M)$ and of $D(M)$, for $M$ any manifold, is discussed in detail in \cite{GGA}, Chapter II. To summarize, $\mathcal E(M)$ is the vector space $C^\infty(M)$, endowed with the topology of uniform convergence on compact sets in $M$ of all derivatives $Df$, for $f\in\mathcal C^\infty(M)$, where $D$ is any $C^\infty$ linear differential operator on $M$. Since $M$ is $\sigma$-compact, $\mathcal E(M)$ is easily seen to be a Fr\'echet space.

An alternative description of the topology of $\mathcal E(M)$ is as follows. If $U$ is any coordinate neighborhood in $M$, we can think of $U$ as an open subset of $\mathbb R^n$, and endow $\mathcal E(U)$ with its usual Fr\'echet space topology. The space $\mathcal E(M)$ is then given the weakest topology that makes the restriction maps $f\mapsto f|_U$ continuous, for all coordinate neighborhoods $U$ in $M$.

For any compact subset $B$ of $M$, the space $\mathcal D(B)$ of $C^\infty$ functions supported in $B$ is given the topology inherited from $\mathcal E(M)$. $\mathcal D(B)$ is then a Fr\'echet space.  The space $\mathcal D(M)$ is the vector space $C_c^\infty(M)$, endowed with the inductive limit topology generated by the $\mathcal D(B)$. In particular, a convex set $W$ in $\mathcal D(M)$ is a neighborhood of $0$ if and only if $W\cap \mathcal D(B)$ is a $0$-neighborhood in $\mathcal D(B)$ for all $B$. 

Since $M$ is second countable, it is the union of a nested increasing sequence $\{U_j\}$ of relatively compact open sets, and we may take the Fr\'echet spaces $\mathcal D(\overline U_j)$ to be a sequence of definition of $\mathcal D(M)$. The topology of $D(M)$ is independent of the choice of the sequence $\{U_j\}$. If $M$ is a complete Riemannian manifold, we may take the $U_j$ to be open balls of radius $j$.


In case $M$ is a Riemannian manifold for which the exponential map at a given point $p$ is a diffeomorphism (such as when $M$ is a Riemannian symmetric space of the noncompact type), then the spaces $\mathcal D(M)$ and $\mathcal E(M)$ can be naturally identified with the corresponding Euclidean test function spaces on the tangent space to $M$ at $p$.

The dual spaces $\mathcal D'(M)$ and $\mathcal E'(M)$ are the spaces of distributions and compactly supported distributions on the manifold $M$. We will endow these distribution spaces with their strong dual topologies (of uniform convergence on bounded subsets of $\mathcal D(M)$ and $\mathcal E(M)$, respectively). In the analysis that we will carry out in this paper, we could just as easily use the weak*-topologies on these dual spaces, since for both the strong and the weak*-topologies on $\mathcal D'(M)$ and $\mathcal E'(M)$, the convergent sequences are the same (\cite{TrevesTVS}, 34.4, Corollary 2), the closed convex sets are the same (\cite{TrevesTVS}, Proposition 36.2),  and, since $\mathcal D(M)$ and $\mathcal E(M)$ are reflexive, the bounded sets are the same (\cite{TrevesTVS}, Theorem 36.2).

\subsection{Convolutions on Lie Groups and Homogeneous Spaces.}
Let $G$ be a unimodular Lie group with Haar measure $du$, normalized so $G$ has unit measure in case $G$ is compact. If $\phi$ and $\psi$ are appropriate functions on  $G$ (e.g., both $C^\infty$ and one with compact support), the convolution $\phi*\psi$ is given by
$$
\phi*\psi(g)=\int_G \phi(u)\,\psi(u^{-1}g)\,du=\int_G \phi(gu^{-1})\,\psi(u)\,du,\qquad g\in G.
$$
If $\phi$ is a $C^\infty$ function and $T$ a distribution on $G$, one of them with compact support, then convolutions $\phi*T$ and $T*\phi$ are the $C^\infty$ functions on $G$ given by
\begin{align*}
\phi*T(g)&=\int_G \phi(gu^{-1})\,dT(u),\\
T*\phi(g)&=\int_G \phi(u^{-1}g)\,dT(u),\qquad g\in G.
\end{align*}
In the above we have used the integral convention for distributions.
If $S$ and $T$ are distributions on $G$, one with compact support, then the convolution $S*T$ is the distribution on $G$ given by
$$
S*T(\phi)=\int_{G\times G} \phi(uv)\,dS(u)\,dT(v),\qquad \phi\in\mathcal D(G).
$$
We interpret the above as an iterated integral; the order does not matter.

If $\phi$ is any function on $G$, we let $\widecheck\phi(g)=\phi(g^{-1})$. Likewise, if $S$ is any distribution on $G$, we define the distribution $\widecheck S$ by $\widecheck S(\phi)=S(\widecheck\phi)$ for all $\phi\in\mathcal D(G)$.

For a fixed distribution $S$, the map $T\mapsto S*T$ is the adjoint to the map $\phi\mapsto \widecheck S*\phi$, and for fixed $T$, the map $S\mapsto S*T$ is the adjoint to $\phi\mapsto \phi*\widecheck T$. Here the appropriate test function space that $\phi$ belongs to depends on whether the distribution it is being convolved with has compact support. In any event, the bilinear map $(S,T)\mapsto S*T$ is therefore separately continuous (say, from $\mathcal D'(G)\times\mathcal E'(G)$ to $\mathcal D'(G)$). Unfortunately, this map is \emph{not} jointly continuous. (See \cite{TrevesTVS}, Ch.41.) By the Banach-Steinhaus Theorem, it is, however, hypocontinuous; that is to say, equicontinuous in each factor over fixed bounded subspaces of the other factor.     

Convolutions in $G$ are commutative if and only if $G$ is abelian. 

Now suppose that $K$ is a compact subgroup of $G$.  Convolution of functions or distributions on $G/K$ is carried out by lifting them to $G$. The convolution calculus on $G/K$ is discussed at length in \cite{GGA}, Ch. II, \S5. 

To summarize, let $\pi\colon G\to G/K$ be the quotient map.  If $\phi$ is any $C^\infty$ function on $G$, we define $\phi_\pi$ on $G/K$ by
\begin{equation}\label{E:fcn-projection}
\phi_\pi(gK)=\int_K \phi(gk)\,dk,\qquad g\in G.
\end{equation}
Then the map $\phi\mapsto \phi_\pi$ is a continuous surjection from $\mathcal D(G)$ onto $\mathcal D(G/K)$ (or from $\mathcal E(G)$ onto $\mathcal E(G/K)$). If $\Lambda$ is a distribution on $G/K$, then its pullback is the distribution $\widetilde \Lambda$ on $G$ given by $\widetilde \Lambda(\phi)=\Lambda(\phi_\pi)$. Thus $\Lambda\mapsto \widetilde \Lambda$ is the adjoint of $\phi\mapsto \phi_\pi$. The map $\Lambda\mapsto\widetilde \Lambda$ is continuous and injective, and its range is the closed subspace $\mathcal D'(G)_{R_K}$ of right $K$-invariant distributions on $G$.  If $F\in\mathcal D(G/K)$, then $\Lambda(F)=\widetilde\Lambda(\widetilde F)$.

If $\Lambda$ and $\mu$ are distributions on $G/K$, one with compact support, then their convolution $\Lambda*\mu$ is the distribution on $G/K$ defined by $(\Lambda*\mu)^\sim=\widetilde \Lambda*\widetilde \mu$, where we note that $\widetilde \Lambda*\widetilde \mu$ belongs to $\mathcal D'(G)_{R_K}$. 
Explicitly, for $F\in\mathcal D(G/K)$, we have
\begin{align}\label{E:conv-int}
(\Lambda*\mu)(F)&=(\widetilde\Lambda*\widetilde\mu)(\widetilde F)\nonumber\\
&=\int_{G/K}\int_{G/K} \int_K F(gkhK)\,dk\,d\Lambda(gK)\,d\mu(hK).
\end{align}

If $F$ is a $C^\infty$ function $G/K$ and $\mu$ a distribution on $G/K$, one with compact support, then $F*\mu$ is similarly defined by lifting to $G$, and we have
\begin{equation}\label{E:conv-int2}
F*\mu(gK)=\int_G \int_K F(gkh^{-1}K)\,dk\,d\mu(hK).
\end{equation} 

For a fixed $\mu\in\mathcal E'(G/K)$, let $c_\mu$ denote the right convolution operator $\Lambda\mapsto \Lambda*\mu$ on $\mathcal D'(G/K)$. We will also use $c_\mu$ to denote the right convolution operator $F\mapsto F*\mu$ on the Fr\'echet space $\mathcal E(G/K)$.

\begin{proposition}\label{E:conv-continuous}
$c_\mu$ is a continuous linear operator on $\mathcal D'(G/K)$, as well as on $\mathcal E(G/K)$.
\end{proposition}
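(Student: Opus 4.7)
My plan is to reduce both continuity assertions to the corresponding and better-known statements about convolution by a compactly supported distribution on the Lie group $G$, by lifting to right $K$-invariant objects on $G$.

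First, the map $F \mapsto \widetilde F$ (with $\widetilde F(g) = F(gK)$) is a topological embedding of $\mathcal E(G/K)$ onto the closed Fr\'echet subspace $\mathcal E(G)_{R_K}$ of right $K$-invariant smooth functions: it is a continuous injection between Fr\'echet spaces whose inverse is the restriction to $\mathcal E(G)_{R_K}$ of the continuous surjection $\phi \mapsto \phi_\pi$ from \eqref{E:fcn-projection} (which reduces to the identity on right $K$-invariant functions). Similarly, by taking strong-dual adjoints of the continuous extension map $F \mapsto \widetilde F\colon \mathcal D(G/K) \to \mathcal D(G)$ (continuous at each level $\mathcal D(\overline U_j)$ because $\pi^{-1}$ of a compact set is compact), the pullback $\Lambda \mapsto \widetilde \Lambda$ is a topological embedding of $\mathcal D'(G/K)$ onto $\mathcal D'(G)_{R_K}$.

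Second, by a change of variables in the convolution integral that uses the right $K$-invariance of $\widetilde \mu$, the right convolution operator $T \mapsto T * \widetilde \mu$ on $G$ preserves right $K$-invariance of its argument. Since $(F*\mu)^\sim = \widetilde F * \widetilde \mu$ and $(\Lambda * \mu)^\sim = \widetilde \Lambda * \widetilde \mu$ by the definition of convolution on $G/K$, it follows that $c_\mu$ on $\mathcal E(G/K)$ (resp.\ on $\mathcal D'(G/K)$) is conjugate under the embedding above to the restriction of $T \mapsto T * \widetilde \mu$ to $\mathcal E(G)_{R_K}$ (resp.\ to $\mathcal D'(G)_{R_K}$).

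Finally, the continuity of $T \mapsto T * \widetilde \mu$ on $\mathcal D'(G)$ is precisely the separate continuity (hypocontinuity) of convolution noted earlier in the excerpt, and the continuity of $\widetilde F \mapsto \widetilde F * \widetilde \mu$ on $\mathcal E(G)$ follows from the routine estimate that, for any compact $B \subset G$ and any smooth differential operator $D$ on $G$, $\sup_{g \in B}|D(\widetilde F * \widetilde \mu)(g)|$ is dominated by a finite sum of sup-norms of derivatives of $\widetilde F$ taken on the compact set $B \cdot \operatorname{supp}(\widetilde \mu)^{-1}$, using that $\widetilde \mu$ has compact support and finite order. Combining the three steps yields the continuity of $c_\mu$ on both spaces. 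The main technical obstacle is confirming the topological nature of the lift identifications in the first step (continuity of the lift is clear, but continuity of the inverse requires the averaging map $\phi \mapsto \phi_\pi$); continuity of convolution on $G$ itself is standard.
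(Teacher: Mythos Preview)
Your argument is correct and follows essentially the same route as the paper: lift to right $K$-invariant objects on $G$, observe that the lift is a topological isomorphism (with inverse given by the averaging map $\phi\mapsto\phi_\pi$), and then invoke the separate continuity of convolution by $\widetilde\mu\in\mathcal E'(G)$ on $\mathcal D'(G)$ and $\mathcal E(G)$. One small wording slip: the pullback $\Lambda\mapsto\widetilde\Lambda$ is the adjoint of $\phi\mapsto\phi_\pi$, not of $F\mapsto\widetilde F$ (the latter's adjoint goes from $\mathcal D'(G)$ to $\mathcal D'(G/K)$); but since you already identify $\phi_\pi$ as the inverse of the lift, this is harmless, and the paper itself uses exactly this identification (together with the Open Mapping Theorem as an alternative justification).
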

\begin{proof}
Let $\mathcal D(G)_K$ and $\mathcal E(G)_K$ denote the closed subspaces of right $K$-invariant functions in $\mathcal D(G)$ and $\mathcal E(G)$, respectively. Then $\mathcal E(G)_K$ is a Fr\'echet space and $\mathcal D(G)_K$ is an $LF$-space (See Lemma \ref{T:lfspace} below). 

The pullback $F\mapsto\widetilde F$, being a continuous bijection from $\mathcal D(G/K)$ onto $\mathcal D(G)_{R_K}$, is therefore a homeomorphism by the Open Mapping Theorem. (This can also be seen directly because the inverse of the pullback is the continuous map $\phi\mapsto \phi_\pi$.) The dual space of $\mathcal D(G)_{R_K}$, which can be identified with $\mathcal D'(G)_{R_K}$, is thus homeomorphic to $\mathcal D'(G/K)$. This means that the pullback $\Lambda\mapsto\widetilde \Lambda$ is a linear homeomorphism from $\mathcal D'(G/K)$ onto $\mathcal D'(G)_{R_K}$.

Hence $\Lambda\mapsto \Lambda*\mu$ can be viewed as the composition of continuous maps
$$
\Lambda\mapsto \widetilde \Lambda\mapsto\widetilde \Lambda*\widetilde \mu=(\Lambda*\mu)^\sim\mapsto \Lambda*\mu.
$$

The proof that $c_\mu$ is continuous on $\mathcal E(G/K)$ is similar.
\end{proof}

If $f$ is a continuous function on $X$, we let $f^\natural$ denote the average $\int_K f^{\ell_k}\,dk$.
 Likewise, if $\mu\in\mathcal E'(G/K)$ , we let $\mu^\natural$ denote the average of $\mu$ with respect to left translations by $K$: $\mu^\natural=\int_K \mu^{\ell_k}\,dk$. Then $\mu^\natural$ is a left $K$-invariant distribution on $G/K$, and the relation \eqref{E:conv-int} shows that $c_\mu=c_{\mu^\natural}$. Thus, in studying mapping properties of $c_\mu$, we do not lose generality by assuming that $\mu$ is left $K$-invariant.

Finally, if $(G,K)$ is a Gelfand pair (as when $(G,K)$ is a Riemannian symmetric pair), we recall that convolution on $G/K$ is commutative on left $K$-invariant functions and distributions.

\section{Convolution Operators on Noncompact Symmetric Spaces}
Many questions about analysis in Euclidean spaces can also be asked about homogeneous spaces, and more specifically symmetric spaces, because of the latter's rich structure. Since the resulting harmonic analysis is necessarily noncommutative and (in the noncompact case) depends to some extent on the Iwasawa and polar decomposition $G=NAK$ and $G=K\overline{A}^+K$, respectively, proofs are often rather different from the ones for $\rr^n$ and involve additional nontrivial  considerations such as Weyl chamber walls.

In the present case, it is natural to try to consider the analogue of Theorem \ref{T:Ehrenpreis60} for Riemannian symmetric spaces of the noncompact type. To settle notation, we will follow that of Helgason's books \cite{DS,GGA,GASS}.


Let $X=G/K$ be a noncompact Riemannian symmetric space, where $G$ is a connected noncompact real semisimple Lie group with finite center, and $K$ is a maximal compact subgroup of $G$. Let $\g$ and $\k$ be the Lie algebras of $G$ and $K$, respectively.  Then  $\g$  has Cartan decomposition $\g=\k+\p$, where $\p$ is the orthogonal complement of $\k$ with respect to the Killing form of $\g$. We let $o$ denote the coset $\{K\}$ of $G/K$, and identify $\p$ with the tangent space $T_oX$.  We endow $X$ with the Riemannian metric induced from the restriction of the Killing form to $\p$.  The map $Y\mapsto \exp Y\cdot o$ is a diffeomorphism of $\p$ onto $X$.

Fix a maximal abelian subspace $\a$ of $\p$ and let $A=\exp \a$. The map $\exp\colon \a\to A$ is a diffeomorphism; let $\log\colon A\to \a$ be its inverse. Let $\Sigma$ be the set of restricted roots of $\g$ with respect to $\a$, and for each $\alpha\in\Sigma$, let $\g_\alpha$ be the corresponding restricted root space, and  let $m_\alpha=\dim \g_\alpha$.
Let $W$ be the Weyl group associated with the root system $\Sigma$.

Fix a Weyl chamber, denoted $\a^+$, in $\a$, and let $\Sigma^+$ be the corresponding system of positive restricted roots. Let $\n=\sum_{\alpha\in\Sigma^+} \g_\alpha$ and $N=\exp \n$.  

The Lie group $G$ has the Iwasawa decompositions $G=KAN=NAK$. For each $g\in G$, in accordance with the decomposition $G=KAN$, we write $g=k(g)\,\exp H(g)\,n(g)$, where $H(g)\in\a$. Likewise, in accordance with the decomposition $G=NAK$, we write $g=n_1(g)\,\exp A(g)\, k_1(g)$, where $A(g)\in\a$. Clearly, $A(g)=-H(g^{-1})$.

A \emph{horocycle} in $X$ is an orbit in $X$ of a conjugate of $N$. The group $G$ acts transitively on the set $\Xi$ of horocycles, so the latter is a homogeneous space of $G$; the isotropy subgroup of $G$ fixing the horocycle $\xi_0=N\cdot o$ is $MN$, where $M$ is the centralizer of $A$ in $K$. Thus $\Xi=G/MN$.

The map $(kM,a)\mapsto ka\cdot\xi_0$ is a diffeomorphism of the product manifold $K/M\times A$ onto $\Xi$. If $\xi=ka\cdot\xi_0$, we say that the coset $kM$ is the \emph{normal} to $\xi$. For convenience, we put $B=K/M$.

For each $x=g\cdot o\in X$ and each $b=kM$, there is a unique horocycle in $X$ containing $x$ with normal $b$. In other words, there is a unique $a\in A$ such that $x\in ka\cdot\xi_0$; let $A(x,b)=\log a$. Then $A(x,b)=-H(g^{-1}k)$. $A(x,b)$ is the multidimensional ``directed distance'' in $X$ from $o$ to the horocycle through $x$ with normal $kM$. ($A(x,b)$ is the analogue, in $G/K$, of the dot product $x\cdot\omega$ in $\rr^n$, for $x\in\rr^n$ and $\omega\in S^{n-1}$, which gives the directed distance from $0\in\rr^n$ to the hyperplane with normal $\omega$ containing $x$.)

The Killing form on $\a$ extends naturally to the complexification  $\a_\cc$, and by duality to the dual space $\a^*$ and ita complexification $\a^*_\cc$. Let $\rho=(1/2)\,\sum_{\alpha\in\Sigma^+} m_\alpha\,\alpha$.

The \emph{Fourier transform} of a function $f\in\mathcal D(X)$ is the function $\widetilde f$ on $\a^*_\cc\times B$ given by
\begin{equation}\label{E:Helg-FT1}
\widetilde f(\lambda,b)=\int_X f(x)\,e^{(-i\lambda+\rho)A(x,b)}\,dx
\end{equation}

Likewise, if $\mu\in\mathcal E'(X)$, its Fourier transform is the function
\begin{equation}\label{E:Helg-FT2}
\widetilde\mu(\lambda,b)=\int_X e^{(-i\lambda+\rho)\,A(x,b)}\,d\mu(x),\qquad (\lambda,b)\in\a^*_\cc\times B.
\end{equation}

In case $\mu$, or $f$, is left $K$-invariant, the Fourier transform is independent of $b$ and 
reduces to the \emph{spherical Fourier transform}, which in the case of $\mu$ is given by
\begin{equation}\label{E:sphertransform1}
\widetilde\mu(\lambda)=\int_X \varphi_{-\lambda}(x)\,d\mu(x),\qquad\lambda\in\a^*_\cc.
\end{equation}
Here $\varphi_\lambda$ is the \emph{spherical function}
$$
\varphi_\lambda(x)=\int_{B} e^{(i\lambda+\rho)\,A(x,b)}\,db,\qquad x\in X.
$$

The inversion formula for the spherical Fourier transform was essentially obtained by Harish-Chandra in 1958 (\cite{HC1958}) and the Paley-Wiener theorem for the Fourier and spherical Fourier transforms on $X$ was obtained by Helgason in 1973 (\cite{He1}); reasonably accessible proofs of these results, now classical, can be found in \cite{GGA}, Ch. IV, and \cite{GASS}, Ch. III.

In particular, let $\mathcal E_K(X)$ denote the subspace of $\mathcal E(X)$ consisting of the left $K$-invariant functions. Its dual space is $\mathcal E'_K(X)$, the subspace of left $K$-invariant distributions in $\mathcal E'(X)$.  Then the spherical Fourier transform $\mu\mapsto\widetilde\mu$ is a linear bijection from $\mathcal E_K'(X)$ onto the space $\mathcal K_W(\a^*_\cc)$ of holomorphic functions on $\a^*_\cc$ of exponential type and of slow increase in $\a^*$. 


 For a fixed $\mu\in\mathcal E_K'(X)$, let $c_\mu\colon \mathcal E(X)\to\mathcal E(X)$ be the convolution operator $c_\mu(f)=f*\mu$. In a previous paper (\cite{CGK2017}, Theorem 5.1), it was proved that if $\widetilde\mu$ is slowly decreasing, then  $c_\mu\colon\mathcal E(X)\to\mathcal E(X)$ is surjective.

In this paper, one of our main results is the converse assertion, which we state below. 

\begin{theorem}\label{T:mainthm}
Suppose that $\mu\in\mathcal E'_K(X)$. If $c_\mu\colon\mathcal E(X)\to\mathcal E(X)$ is surjective, then $\widetilde\mu$ is a slowly decreasing function on $\a^*_\cc$.
\end{theorem}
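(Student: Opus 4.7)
The plan is to use duality to convert surjectivity of $c_\mu$ on $\mathcal E(X)$ into a closed-range / open-mapping statement on the spherical Fourier transform side, and then derive slow decrease by a minimum-modulus argument. By averaging on the left we may assume $\mu$ is $K$-bi-invariant (since $c_\mu = c_{\mu^\natural}$). Surjectivity of $c_\mu$ on $\mathcal E(X)$ restricts to surjectivity on the closed Fr\'echet subspace $\mathcal E_K(X)$: given $f\in\mathcal E_K(X)$ and $g\in\mathcal E(X)$ with $g*\mu = f$, the $K$-average $g^\natural\in\mathcal E_K(X)$ still satisfies $g^\natural*\mu = f^\natural = f$. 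By the closed range theorem, the transpose $c_\mu^t\colon\mathcal E'_K(X)\to\mathcal E'_K(X)$ is then injective with weak*-closed range; by commutativity in the Gelfand pair $(G,K)$, it is given by $T\mapsto T*\mu$.

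Next, by Helgason's spherical Paley-Wiener theorem the spherical Fourier transform identifies $\mathcal E'_K(X)$ with $\mathcal K_W(\a^*_\cc)$, and carries $c_\mu^t$ to the multiplication operator $M_{\widetilde\mu}\colon F\mapsto F\cdot\widetilde\mu$. Thus $M_{\widetilde\mu}$ is a topological embedding of $\mathcal K_W(\a^*_\cc)$ into itself, so its inverse on the image is continuous. Equivalently, one extracts a \emph{division property}: if $F\in\mathcal K_W(\a^*_\cc)$ and $F/\widetilde\mu$ is entire on $\a^*_\cc$, then $F/\widetilde\mu\in\mathcal K_W(\a^*_\cc)$ (the exponential-type part being automatic by \cite{Malgrange1955}, Theorem 1).

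Finally, I would argue by contradiction: if $\widetilde\mu$ were not slowly decreasing, then adapting Ehrenpreis's minimum-modulus construction (\cite{Ehrenpreis1955}, Theorem 5) yields a sequence $G_n\in\mathcal K_W(\a^*_\cc)$ that is unbounded in a natural seminorm while $\widetilde\mu\cdot G_n$ remains bounded---the $G_n$ being $W$-symmetrized Paley-Wiener functions localized near the sequence $\xi_n\in\a^*$ witnessing failure of slow decrease. This contradicts the continuity of $M_{\widetilde\mu}^{-1}$ on its image. The main obstacle is this construction step: the $G_n$ must respect Weyl symmetry and the exponential-type plus slow-increase constraints defining $\mathcal K_W(\a^*_\cc)$, while their products $\widetilde\mu\cdot G_n$ stay uniformly moderate. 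Essentially this is a transfer of Ehrenpreis's Euclidean minimum-modulus argument to the Weyl-invariant setting on $\a^*_\cc$; handling the $W$-orbit structure and the polynomial-growth norm on $\a^*$ is the real work.
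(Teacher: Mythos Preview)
Your reduction to $\mathcal E_K(X)$ by averaging is the same first move the paper makes, but from there the routes diverge. The paper transfers the problem to $\mathfrak a$ via the Abel transform, so that surjectivity of $c_\mu$ on $\mathcal E_K(X)$ becomes surjectivity of the Euclidean operator $c_{\mathcal A\mu}$ on $\mathcal E_W(\mathfrak a)$, and then proves a purely Euclidean theorem: if $\Lambda\in\mathcal E'_W(\mathbb R^n)$ and $c_\Lambda$ is onto $\mathcal E_W(\mathbb R^n)$, then $\Lambda$ is invertible. For this the paper does \emph{not} use an Ehrenpreis minimum-modulus construction; it uses H\"ormander's singular-support criterion (non-invertibility produces $S\in\mathcal E'$ with $\text{sing supp }S=\{x_0\}$ and $\Lambda*S$ smooth), $W$-symmetrizes $S$, and then runs a Banach--Steinhaus argument on the bilinear form $(f,v)\mapsto\int fv$ together with Sobolev estimates to force $S\in C^\infty$.

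Your approach---dualize, pass to multiplication by $\widetilde\mu$ on the Paley--Wiener space, and contradict boundedness via $W$-symmetrized Ehrenpreis bumps---is viable, and in fact the paper carries out exactly that construction later, in the proof that $c_\mu$ has a fundamental solution iff $\widetilde\mu$ is slowly decreasing. Two small corrections: the transpose of $c_\mu$ is $c_{\widecheck\mu}$, not $c_\mu$ (commutativity of $K$-invariant convolution does not make $\widecheck\mu=\mu$), though this is harmless since slow decrease of $\widetilde\mu(\lambda)$ and $\widetilde\mu(-\lambda)$ are equivalent; and your ``division property'' is not literally equivalent to the embedding statement and is not what drives the contradiction---what you actually need (and do get, directly from surjectivity by evaluating $T_n$ on preimages $f=c_\mu g$) is that the inverse of the transpose takes bounded sets to bounded sets. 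So both proofs work; yours trades H\"ormander's singular-support machinery for explicit Paley--Wiener constructions, which the paper reserves for a different theorem.
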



The main proof of Theorem \ref{T:mainthm} will occur in Section 4. The idea is to transfer the analysis to $\a$ by means of the Abel transform. In order to start the process, we first recall a few facts about the horocycle Radon transform and the Abel transform on compactly supported distributions on $X$. 

Let $F\in\mathcal E(\a)$. For each $b=kM\in B$, let $F^b$ be the \emph{horocycle plane wave} $F^b(x)=F(A(x,b))$. (It is called a horocycle plane wave since it is constant on horocycles with normal $b$.) The function $\Phi_F$ on $X\times B$ given by $\Phi_F(x,b)=F^b(x)$ is clearly $C^\infty$, and the linear map $F\mapsto \Phi_F$ from $\mathcal E(\a)$ to $\mathcal E(X\times B)$ is therefore a continuous map of Fr\'echet spaces, being the pullback of the smooth map $(x,b)\mapsto A(x,b)$ from $X\times B$ to $\a$. 

In particular, for each $b\in B$, the map $F\mapsto F^b$ is continuous from $\mathcal E(\a)$ to $\mathcal E(X)$. The adjoint of this map is the \emph{horocycle Radon transform} $R_b$ from $\mathcal E'(X)$ to $\mathcal E'(\a)$. Explicitly, for fixed $b\in B$ and $\mu\in\mathcal E'(X)$, the Radon transform $R_b\,\mu$ is the (compactly supported) distribution on $\a$ given by
\begin{equation}\label{E:radontransform1}
R_b\,\mu(F)=\mu(F^b),\qquad \qquad\text{for all }F\in\mathcal E(\a).
\end{equation}
Thus 
$$
R_b\mu(F)=\int_X F(A(x,b))\,d\mu(x),\qquad F\in\mathcal E(\a).
$$
Since $\|A(x,b)\|\leq d(o,x)$ for all $x\in X$ and all $b\in B$, it is clear that if $\mu$ has support in the closed ball $\overline B_R(o)\subset X$, then $R_b\,\mu$ has  support in the closed ball $\{H\in\a\,\colon\,\|H\|\leq R\}$.

If $\widetilde \mu$ denotes the Fourier transform of $S$, then we have the \emph{projection-slice theorem}:
\begin{align}
\widetilde \mu(\lambda,b)&=\int_X e^{(-i\lambda+\rho)A(x,b)}\,d\mu(x)\nonumber\\
&=\left(e^\rho\,R_b\,\mu\right)^*(\lambda).\label{E:projslice}
\end{align}
where $S\mapsto S^*$ denotes the Euclidean Fourier transform on $\a$.

Note that $R_b\,\mu^{\tau(k)}=R_{k^{-1}\cdot b}\,\mu$ for all $k\in K$. Thus if $\mu\in\mathcal E_K'(X)$, we see that $R_b\,\mu=R_{b_0}\,\mu$ for all $b\in B$, where $b_0=eM$.

 If $\mu\in\mathcal E_K'(X)$, we define its \emph{Abel transform} $\mathcal A\mu\in\mathcal E'(\a)$  by
\begin{equation}\label{E:abeltr}
\mathcal A\mu=e^\rho\,R_{b_0}\mu.
\end{equation}
Since $\mu$ is left $K$-invariant, $b_0$ can be replaced by any $b\in B$. For $\mu\in\mathcal E'_K(X)$, the projection-slice theorem \eqref{E:projslice} becomes
\begin{equation}\label{E:abelslice}
\widetilde\mu(\lambda)=(\mathcal A\mu)^*(\lambda),\qquad \lambda\in\a^*_\cc.
\end{equation}

Let $\mathcal E_W(\a)$ denote the subspace consisting of all $W$-invariant elements of $\mathcal E(\a)$. Its dual space can be identified with the space $\mathcal E'_W(\a)$ consisting of all $W$-invariant elements of $\mathcal E'(\a)$.

The Paley-Wiener Theorem for $K$-invariant distributions  implies that the Abel transform is a linear bijection from $\mathcal E'_K(X)$ onto $\mathcal E'_W(\a)$ and in fact we have a commutative diagram of linear bijections
\begin{equation}\label{E:abelcommutative}
\begin{tikzcd}
\mathcal E'_K(X)\arrow[rr,"\mathcal A"] \arrow[dr,"\sim"] && \mathcal E'_W(\a) \arrow[dl,swap,"*"] \\
& \mathcal H_W(\a^*_\cc)
\end{tikzcd}
\end{equation}
Here $\mathcal H_W(\a^*_\cc)$ is the vector space of slowly increasing $W$-invariant holomorphic functions on $\a^*_\cc$ of exponential type. 



For each $F\in\mathcal E(\a)$, let $TF$ denote the $K$-invariant function on $X$ given by $TF=((e^\rho\, F)^b)^\natural$, where $b$ is any element of $B$. It is easy to see that
$$
TF(x)=\int_B e^{\rho(A(x,b))}\,F(A(x,b))\,db,\qquad x\in X.
$$

\begin{proposition}\label{T:bijection1}
The map $T$ is a linear bijection from $\mathcal E_W(\a)$ onto $\mathcal E_K(X)$.
\end{proposition}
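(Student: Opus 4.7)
My plan is to deduce the bijectivity of $T$ from the already-established bijectivity of the Abel transform $\mathcal A\colon\mathcal E'_K(X)\to\mathcal E'_W(\a)$ in diagram~\eqref{E:abelcommutative}, by exhibiting $T$ as the transpose of $\mathcal A$ under the natural reflexive pairings $\mathcal E_W(\a)''=\mathcal E_W(\a)$ and $\mathcal E_K(X)''=\mathcal E_K(X)$.

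The first step is the duality identity
\[
(\ast)\qquad\mu(TF)=\mathcal A\mu(F),\qquad \mu\in\mathcal E'_K(X),\ F\in\mathcal E_W(\a).
\]
Left $K$-invariance of $\mu$ gives $\mu(g)=\mu(g^\natural)$ for every $g\in\mathcal E(X)$ (obtained by averaging over $K$), so applied to $g=(e^\rho F)^{b_0}$, together with the definitions \eqref{E:radontransform1} and \eqref{E:abeltr} of the Radon and Abel transforms,
\[
\mu(TF)=\mu\bigl((e^\rho F)^{b_0}\bigr)=R_{b_0}\mu(e^\rho F)=\mathcal A\mu(F).
\]

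Injectivity of $T$ is then immediate: if $TF=0$, then by $(\ast)$, $\mathcal A\mu(F)=0$ for every $\mu\in\mathcal E'_K(X)$; since $\mathcal A$ is surjective onto $\mathcal E'_W(\a)$, every $\nu\in\mathcal E'_W(\a)$ annihilates $F$, and since $\mathcal E'_W(\a)$ is the topological dual of the Hausdorff space $\mathcal E_W(\a)$, we conclude $F=0$.

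For surjectivity, $(\ast)$ identifies $T$ with the transpose ${}^t\mathcal A$ between the reflexive Fr\'echet--Montel spaces $\mathcal E_W(\a)$ and $\mathcal E_K(X)$. Reading \eqref{E:abelcommutative} topologically, $\mathcal A$ is the composition of the spherical Fourier transform $\mathcal E'_K(X)\to\mathcal H_W(\a^*_\cc)$ with the inverse of the Euclidean Fourier transform $\mathcal E'_W(\a)\to\mathcal H_W(\a^*_\cc)$, each a topological isomorphism by the corresponding Paley--Wiener theorem; hence $\mathcal A$ is a topological isomorphism, and so is its transpose $T={}^t\mathcal A$, in particular surjective. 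The main obstacle I anticipate is substantiating the \emph{topological} (not merely set-theoretic) content of the Paley--Wiener isomorphisms in \eqref{E:abelcommutative}; alternatively, one may invoke an open mapping theorem for strong duals of Fr\'echet--Montel spaces to upgrade the bijectivity of $\mathcal A$ to openness.
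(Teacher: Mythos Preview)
Your duality identity $(\ast)$ and the injectivity argument are essentially the paper's: both compute $T^*=\mathcal A$ and deduce injectivity of $T$ from surjectivity of $\mathcal A$. Your derivation of $(\ast)$ via $\mu(g^\natural)=\mu(g)$ is in fact a bit slicker than the paper's explicit integral computation over $B$.

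The gap is in your surjectivity step. You argue that $\mathcal A$ is a \emph{topological} isomorphism by reading the diagram~\eqref{E:abelcommutative} topologically, and then transpose. But at this point in the paper no topology has been placed on $\mathcal H_W(\a^*_\cc)$; in fact, the paper defines that topology only \emph{after} this proposition, precisely by transporting the topology through $T$ (see the paragraph immediately following Proposition~\ref{T:frechet}). So within the paper's logical order your argument is circular, and you correctly flag this as the main obstacle. Your fallback suggestion---an open mapping theorem for strong duals of Fr\'echet--Montel spaces applied to the bijection $\mathcal A$---could be made to work, but requires first checking that $\mathcal A$ is strongly continuous and then invoking nontrivial machinery (webbed/ultrabornological spaces, De Wilde, etc.).

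The paper avoids all of this with a single clean stroke: Proposition~\ref{T:frechet}, the surjectivity criterion for continuous linear maps between Fr\'echet spaces, says $T$ is onto iff $T^*$ is injective with weak* closed range. Since $T^*=\mathcal A$ is a (set-theoretic) bijection onto $\mathcal E'_W(\a)$, both conditions are trivially satisfied---the range is everything, hence closed. No topology on $\mathcal H_W(\a^*_\cc)$ and no open mapping theorem on dual spaces is needed; the Open Mapping Theorem is applied afterward, on the Fr\'echet side, to upgrade $T$ to a homeomorphism.
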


This bijection is of course different from the well-known bijection given by the restriction map from $\mathcal E_K(X)\approx \mathcal E_K(\p)$ onto $\mathcal{E}_W(\a)$.

\begin{proof}
The map $T$ is clearly a continuous linear map of Fr\'echet spaces, and for any $F\in \mathcal E_W(\a)$ (in fact for any $F\in\mathcal E(\a)$) it is clear that $TF\in \mathcal E_K(X)$. Let us now prove that the adjoint $T^*$ coincides with the Abel transform $\mathcal A\colon \mathcal E'_K(X)\to\mathcal E'_W(\a)$.  Suppose that $\mu\in\mathcal E_K'(X)$. Then for any $F\in \mathcal E_W(\a)$ we have
\begin{align*}
T^*\mu(F)&=\mu(TF)\\
&=\int_X\int_B e^{\rho(A(x,b))}\,F(A(x,b))\,db\,d\mu(x)\\
&=\int_B\int_X e^{\rho(A(x,b))}\,F(A(x,b))\,d\mu(x)\,db\\
&=\int_B\mu((e^\rho\,F)^b)\,db\\
&=\int_B R_b\mu\,(e^\rho\,F)\,db\\
&=(\mathcal A\mu)\,(F),
\end{align*}
the last inequality resulting from the identity $e^\rho\,R_b\mu=\mathcal A\mu$, for all $b\in B$.

Since the Abel transform $\mathcal A\colon \mathcal E_K'(X)\to\mathcal E_W'(\a)$ is a bijection, it is injective and has closed range (namely all of $\mathcal E_W'(\a)$). It follows from Proposition \ref{T:frechet} below that $T$ is a bijection.
\end{proof}

In the last part of the proof above we have used the following fact about linear mappings of Fr\'echet spaces.

\begin{proposition}\label{T:frechet}
	Let $E$ and $F$ be Fr\'echet spaces. A continuous linear map $\Phi\colon E\to F$ is surjective if and only if the adjoint map $T^*\colon F'\to E'$ is injective and has weak* closed range in $E'$.
\end{proposition}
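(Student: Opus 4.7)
The plan is to recognize this as the surjectivity version of the closed-range theorem for Fr\'echet spaces, and to prove it via the open mapping theorem, Hahn--Banach, and an annihilator identification. Throughout I write $\Phi^*$ for what the statement calls $T^*$ (the adjoint of $\Phi$).

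For the forward direction, assume $\Phi$ is surjective. The open mapping theorem yields a topological isomorphism $\bar\Phi\colon E/\ker\Phi \to F$ of Fr\'echet spaces. Injectivity of $\Phi^*$ is immediate: if $\Phi^*\ell = \ell\circ\Phi = 0$, then $\ell$ vanishes on $\Phi(E) = F$, so $\ell = 0$. For weak*-closedness of the range, I would establish the identity $\Phi^*(F') = (\ker\Phi)^\perp \subset E'$. The inclusion $\subseteq$ is automatic, and for $\supseteq$ any $u \in E'$ annihilating $\ker\Phi$ descends to a continuous linear functional on $E/\ker\Phi$ in the quotient Fr\'echet topology and transports via $\bar\Phi^{-1}$ to an $\ell \in F'$ with $\Phi^*\ell = u$. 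Since annihilators of arbitrary subsets are weak*-closed by their very definition, this finishes the forward direction.

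For the converse, assume $\Phi^*$ is injective with weak*-closed range. Hahn--Banach gives density at once: any $\ell \in F'$ annihilating $\Phi(E)$ lies in $\ker\Phi^* = \{0\}$. To upgrade density to surjectivity one must show $\Phi(E)$ is closed. I would pick a basic convex balanced $0$-neighborhood $U \subset E$ and combine the bipolar identity $\overline{\Phi(U)} = \Phi(U)^{\circ\circ}$ with $\Phi(U)^\circ = (\Phi^*)^{-1}(U^\circ)$ to extract a $0$-neighborhood in $F$ sitting inside $\overline{\Phi(U)}$. The weak*-closedness of $\Phi^*(F')$ forces $\Phi^*(F') \cap U^\circ$ to be weak*-closed, and by a Banach--Dieudonn\'e style argument one concludes first that $\Phi(E)$ is closed, and then (together with density) that $\Phi(E) = F$.

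The main obstacle is this converse direction: turning the purely dual-side hypothesis of weak*-closed range into an actual surjectivity statement on the primal side. This is the genuine content of the Banach--Dieudonn\'e / closed-range theorem for Fr\'echet spaces, and a complete write-up would either carry out the bipolar computation carefully or simply cite the result from Tr\`eves, \emph{Topological Vector Spaces, Distributions, and Kernels} (Theorem~37.2 and its corollaries), which is very likely the route the authors take.
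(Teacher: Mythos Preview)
Your outline is correct, and your prediction is essentially on target: the paper does not prove this proposition at all but simply cites it, writing ``For a proof, see Theorem 7.7, Ch.~IV in [Sch]. See also Theorem 3.7, Ch.~I in [GASS] for a generalization.'' So the paper's ``proof'' is a bare reference to Schaefer's \emph{Topological Vector Spaces} (and Helgason), whereas you guessed Tr\`eves; both are standard sources for the closed-range theorem in the Fr\'echet setting.

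Your sketch of the forward direction is fine and self-contained. For the converse you correctly identify that the real work is the Banach--Dieudonn\'e / closed-range machinery, and you sensibly defer to a reference for that step. This matches the paper's intent exactly: the proposition is quoted as a known tool, not re-proved. If anything, you have supplied more detail than the authors did.
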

For a proof, see Theorem 7.7, Ch. IV in \cite{Sch}. See also Theorem 3.7, Ch. I in \cite{GASS} for a generalization. 

 Since $\mathcal E_W(\a)$ and $\mathcal E_K(X)$ are Fr\'echet spaces, the Open Mapping Theorem implies that $T$ is a homeomorphism, and it follows that its adjoint, the Abel transform $\mathcal A\colon\mathcal E'_K(X)\to\mathcal E'_W(\a)$, is also a homeomorphism (for the strong or weak* topologies). Using either of these dual topologies, we can therefore topologize $\mathcal{H}_W(\a^*_\cc)$ so that all maps in the diagram \eqref{E:abelcommutative} are homeomorphisms.


If $\Lambda\in\mathcal E'(\a)$, let $c_\Lambda$ denote the Euclidean convolution operator on $\mathcal E(\a)$ (respectively $\mathcal E'(\a)$) given by
$F\mapsto F*\Lambda$ (respectively $\Psi\mapsto\Psi*\Lambda$). Yet again abusing notation slightly, we will
 denote by $c_\Lambda$ the convolution operator on $\mathcal E(\Xi)$ given by
$$
c_\Lambda(\varphi)(g\cdot\xi_0)=\int_A \varphi(g\,\exp (-H)\cdot\xi_0)\,d\Lambda(H),\qquad \varphi\in\mathcal E(\Xi).
$$
Since the lift to $G$ of the right hand side is smooth, one sees that $c_\Lambda\varphi\in\mathcal E(\Xi)$.

\begin{proposition}\label{T:commutative1}
Suppose that $\mu\in\mathcal E_K'(X)$. Then the following diagram commutes:
\begin{equation}\label{E:commdiag1}
\begin{tikzcd}
\mathcal E_W(\a) \arrow{r}{c_{\mathcal A\mu}} \arrow[swap]{d}{T} & \mathcal E_W(\a) \arrow{d}{T} \\
\mathcal E_K(X) \arrow{r}{c_\mu}& \mathcal E_K(X)
\end{tikzcd}
\end{equation}
\end{proposition}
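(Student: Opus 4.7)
My approach is to verify the commutative diagram via duality, leveraging the identification $T^* = \mathcal{A}$ established in the proof of Proposition \ref{T:bijection1}. Both $c_\mu \circ T$ and $T \circ c_{\mathcal{A}\mu}$ are continuous linear operators $\mathcal{E}_W(\a) \to \mathcal{E}_K(X)$ between Fr\'echet spaces, and since $\mathcal{E}'_K(X)$ separates points of $\mathcal{E}_K(X)$, it suffices to check the identity of the adjoint maps $\mathcal{A} \circ c_\mu^* = c_{\mathcal{A}\mu}^* \circ \mathcal{A}$ on $\mathcal{E}'_K(X)$.

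I first identify each adjoint as convolution with an appropriately reflected distribution. On the Euclidean side, a standard Fubini computation gives $c_{\mathcal{A}\mu}^* \eta = \eta * (\mathcal{A}\mu)^\vee$ for $\eta \in \mathcal E'_W(\a)$, where the reflection $\Lambda^\vee$ is defined by $\Lambda^\vee(F) = \Lambda(F \circ (-\mathrm{id}))$; this operation preserves $W$-invariance. On the $X$-side, since $X$ is not a group, one passes to the bi-$K$-invariant lift $\widetilde{\mu} \in \mathcal{D}'(G)$, forms $\widecheck{\widetilde{\mu}}$, and projects back to obtain $\mu^\sharp \in \mathcal{E}'_K(X)$; then $c_\mu^* \nu = \nu * \mu^\sharp$ for $\nu \in \mathcal{E}'_K(X)$. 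The commutativity of the adjoint diagram thereby reduces to the identity
$$\mathcal{A}(\nu * \mu^\sharp) = \mathcal{A}\nu * (\mathcal{A}\mu)^\vee.$$

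This in turn follows from two assertions about the Abel transform: (i) $\mathcal{A}$ is a convolution homomorphism, $\mathcal{A}(\nu_1 * \nu_2) = \mathcal{A}\nu_1 * \mathcal{A}\nu_2$, and (ii) $\mathcal{A}$ commutes with reflection, $\mathcal{A}(\mu^\sharp) = (\mathcal{A}\mu)^\vee$. Both are immediate from the commutative diagram \eqref{E:abelcommutative} upon applying the Euclidean Fourier transform on $\a^*_\cc$: the spherical Fourier transform on $\mathcal{E}'_K(X)$ converts convolution to pointwise product and reflection to $\lambda \mapsto -\lambda$, and the same holds for the Euclidean Fourier transform on $\mathcal{E}'_W(\a)$; since \eqref{E:abelcommutative} identifies these two transforms via $\mathcal{A}$, both (i) and (ii) follow from the injectivity of the spherical Fourier transform.

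The principal technical obstacle lies in the careful identification of $c_\mu^*$ as convolution by $\mu^\sharp$, given that $X$ is a homogeneous space rather than a group; this requires the bi-$K$-invariant lifting to $G$ and a verification that the resulting operation descends consistently to $X$. If one prefers to bypass this duality altogether, the equality $T(F * \mathcal{A}\mu) = TF * \mu$ may be verified by direct computation: using the integral definition of $T$, the convolution formula \eqref{E:conv-int2}, and the cocycle identity $A(g\cdot x, b) = A(x, g^{-1}\cdot b) + A(g\cdot o, b)$, both sides reduce to a common double integral over $X \times B$, the matching step being the classical spherical function identity $\varphi_\lambda(g^{-1}) = \varphi_{-\lambda}(g)$.
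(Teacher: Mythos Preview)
Your proposal is correct and follows essentially the same route as the paper: pass to the adjoint diagram using $T^*=\mathcal A$, identify $c_\mu^*$ and $c_{\mathcal A\mu}^*$ as convolution by the reflected distributions $\widecheck\mu$ and $(\mathcal A\mu)^\vee$, and then verify $\mathcal A(S*\widecheck\mu)=\mathcal A S*(\mathcal A\mu)^\vee$ by taking Euclidean Fourier transforms (both sides give $\widetilde S(\lambda)\widetilde\mu(-\lambda)$). The only cosmetic difference is that you factor the final step into ``$\mathcal A$ is a convolution homomorphism'' plus ``$\mathcal A$ intertwines the reflections,'' whereas the paper checks the identity in one stroke.
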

\begin{proof}
The commutativity of the diagram \eqref{E:commdiag1} can be proven by direct computation, but since all the maps in it are continuous, it is easier to prove it by showing that the diagram consisting of the adjoint maps is commutative:
\begin{equation}\label{E:adjcomm}
\begin{tikzcd}
\mathcal E'_W(\a)& \arrow[l,"c^*_{\mathcal A\mu}", swap]\mathcal E'_W(\a)\\
\arrow[u,"T^*"]\mathcal E'_K(X)&\arrow[l,"c_\mu^*",swap]\mathcal E'_K(X)\arrow[u,"T^*", swap] 
\end{tikzcd}
\end{equation}
But as shown in the proof of Proposition \ref{T:bijection1}, the adjoint $T^*$ coincides with the Abel transform $\mathcal A$.
In addition we have $c_\mu^*=c_{\widecheck\mu}$, where $\widecheck\mu$ is the $K$-invariant distribution on $X$ whose pullback to $G$ is the reflection of the pullback of $\mu$ with respect to the inversion $g\mapsto g^{-1}$. 

Finally we have $c^*_{\mathcal A\mu}=c_{(\mathcal A \mu)^\vee}$, where $(\mathcal A\mu)^\vee$ is the reflection of the distribution 
$\mathcal A\mu$ with respect to the map $H\mapsto -H$ of $\a$. Thus we wish to prove the commutativity of the diagram
\begin{equation}\label{E:commdiagE'}
\begin{tikzcd}
\mathcal E'_W(\a)& \arrow[l,"c_{(\mathcal A\mu)^\vee}", swap]\mathcal E'_W(\a)\\
\arrow[u,"\mathcal A"]\mathcal E'_K(X)&\arrow[l,"c_{\widecheck\mu}",swap]\mathcal E'_K(X)\arrow[u,"\mathcal A", swap] 
\end{tikzcd}
\end{equation}
which amounts to showing that
$$
\mathcal AS*(\mathcal A\mu)^\vee=\mathcal A(S*\widecheck \mu)
$$
for all $S\in\mathcal E'_K(X)$. But the Euclidean Fourier transforms of both sides in $\a$ equal $\widetilde S(\lambda)\widetilde\mu(-\lambda)$,  completing the proof.
\end{proof}


Now if $\mu$ is $K$-invariant the relation $(f*\mu)^\sim(\lambda,b)=\widetilde f(\lambda,b)\,\widetilde\mu(\lambda)$ and the projection-slice theorem shows that
\begin{equation}\label{E:conv2}
R(f*\mu)=Rf*\mathcal A\mu
\end{equation}
for all $f\in\mathcal D(X)$, where the convolution on the right is taken over the $A$ factor in $\Xi=K/M\times A$; explicitly,
$$
Rf*\mathcal A\mu(b,\exp H)=\int_{\a} Rf(b,\exp(H-H'))\,d(\mathcal A\mu)(H').
$$
 
The \emph{dual transform} $R^*$ from functions on $\Xi$ to functions on $X$ is defined by
\begin{align}
R^*\varphi(g\cdot o)&=\int_K \varphi(gk\cdot\xi_0)\,dk\nonumber\\
&=\int_B \varphi(b,\exp(A(x,b)))\,e^{2\rho(A(x,b))}\,db\qquad (\varphi\in\mathcal E(\Xi)).\label{E:dualtransform}
\end{align}
It is known \cite{GASS} that the linear map $R^*\colon\mathcal E(\Xi)\to\mathcal E(X)$ is surjective.

Let $b_0=eM\in B$. If $\mu\in \mathcal E'_K(X)$, let $\widehat{\mu}=R_{b_0}\mu=e^{-\rho}\,\mathcal A\mu$. Then $\widehat\mu\in\mathcal E'(\a)$. 


\begin{proposition}\label{T:commutative2}
If $\mu\in\mathcal E_K'(X)$, then the following diagram commutes:
$$
\begin{tikzcd}
\mathcal E(\Xi) \arrow{r}{c_{\widehat\mu}} \arrow[swap]{d}{R^*} & \mathcal E(\Xi) \arrow{d}{R^*} \\
\mathcal E(X) \arrow{r}{c_\mu}& \mathcal E(X)
\end{tikzcd}
$$
\end{proposition}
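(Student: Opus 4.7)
My plan is to reduce the asserted commutativity to an equality of $G$-equivariant maps evaluated at the origin $o\in X$, and then to verify it inside $G$ using the bi-$K$-invariance of the lift of $\mu$ together with the Iwasawa decomposition. Each of $R^*$, $c_{\widehat\mu}$, and $c_\mu$ commutes with left translation by $G$: for $R^*$ and $c_{\widehat\mu}$ this is immediate from the defining formulas \eqref{E:dualtransform} and the formula for $c_\Lambda$ on $\mathcal E(\Xi)$, while for $c_\mu$ it is the standard left-equivariance of convolution on a homogeneous space. Hence $R^*\circ c_{\widehat\mu}$ and $c_\mu\circ R^*$ are $G$-equivariant linear maps $\mathcal E(\Xi)\to\mathcal E(X)$, and it suffices to check that they agree at $o$ for every $\varphi\in\mathcal E(\Xi)$.

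Evaluating each side at $o$ yields
$$R^*(c_{\widehat\mu}\varphi)(o)=\int_K\int_{\a}\varphi(k\exp(-H)\cdot\xi_0)\,d\widehat\mu(H)\,dk$$
and
$$c_\mu(R^*\varphi)(o)=\int_G\int_K\varphi(u^{-1}k\cdot\xi_0)\,dk\,d\widetilde\mu(u),$$
where $\widetilde\mu\in\mathcal E'(G)$ is the bi-$K$-invariant pullback of $\mu$. Since $\widehat\mu=R_{b_0}\mu$ and $A(u\cdot o,b_0)=-H(u^{-1})$, the definition \eqref{E:radontransform1} of the horocycle Radon transform (together with right $K$-invariance of $u\mapsto H(u^{-1})$) rewrites the first expression as $\int_G\int_K\varphi(k\exp(H(u^{-1}))\cdot\xi_0)\,dk\,d\widetilde\mu(u)$. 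Both sides now have the form $\int_G\Psi(u)\,d\widetilde\mu(u)$, so by bi-$K$-invariance of $\widetilde\mu$ it suffices to verify that the bi-$K$-averages of the two integrands
$$\Psi_1(u)=\int_K\varphi(k\exp(H(u^{-1}))\cdot\xi_0)\,dk,\qquad \Psi_2(u)=\int_K\varphi(u^{-1}k\cdot\xi_0)\,dk$$
agree as functions of $u\in G$.

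The final step is a short computation using only the Iwasawa identities $H(k_0 g)=H(g)$ and $k(k_0 g)=k_0\,k(g)$, together with the fact that $N$ fixes $\xi_0$ (so that $g\cdot\xi_0=k(g)\exp H(g)\cdot\xi_0$ for every $g\in G$). For $\Psi_1^\flat$, applying $H(k_2^{-1}g)=H(g)$ removes the $k_2$-dependence (making the $k_2$-integration trivial), and then the substitution $k_1\mapsto k_1^{-1}$ produces $\int_K\int_K\varphi(k\exp(H(u^{-1}k'))\cdot\xi_0)\,dk\,dk'$. For $\Psi_2^\flat$, the substitution $k'=k_1^{-1}k$ makes the $k_1$-integration trivial; then the KAN decomposition of $k_2^{-1}u^{-1}k'$ combined with $N\cdot\xi_0=\xi_0$ gives $k_2^{-1}u^{-1}k'\cdot\xi_0=k_2^{-1}k(u^{-1}k')\exp(H(u^{-1}k'))\cdot\xi_0$, and substituting $\tilde k=k_2^{-1}k(u^{-1}k')$ produces the same expression. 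The main delicate point will be careful bookkeeping of the successive $K$-substitutions, ensuring that on the right-hand side the $N$-part of $u^{-1}k$ is absorbed by $\xi_0$ and its Iwasawa $K$-part is absorbed into the free $K$-variable of the average.
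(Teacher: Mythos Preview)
Your proposal is correct and follows essentially the same route as the paper: lift $\mu$ to a bi-$K$-invariant distribution on $G$, use the Iwasawa identity $g\cdot\xi_0=k(g)\exp H(g)\cdot\xi_0$ (since $N$ fixes $\xi_0$) to pass between $\exp H(u^{-1})\cdot\xi_0$ and $u^{-1}\cdot\xi_0$, and absorb the Iwasawa $K$-parts via Haar-measure substitutions in $K$. The only organizational difference is that you first reduce to $o$ by $G$-equivariance and then compare bi-$K$-averages of the two integrands, whereas the paper carries an arbitrary base point $g_0$ throughout and manipulates one side into the other; this is cosmetic.
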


\noindent\textit{Remark:} This proposition is in some respects a refinement of Proposition \ref{T:commutative1}. It is also the symmetric space analogue of the following well-known  formula for the $d$-plane Radon transform $R$ on $\rr^n$:
$$
R^*(Rf*\varphi)=f*R^*\varphi,
$$
which holds for all $f\in\mathcal D(\rr^n),\,\varphi\in\mathcal E(G(d,n))$, where $G(d,n)$ is the manifold of unoriented $d$-planes in $\rr^n$. The convolution $Rf*\varphi$ is carried out along the fibers of the vector bundle $G(d,n)\to G_{d,n}$, where $G_{d,n}$ is the Grassmann manifold of $d$-dimensional subspaces of $\rr^n$.

\begin{proof} Let $\varphi\in\mathcal E(\Xi)$, and let $\Phi$ be its lift to $G$, so that $\Phi(g)=\varphi(g\cdot\xi_0)$. Likewise, let $U$ be the lift of the distribution $\mu$ to $G$, given by
$$
U(F)=\mu(F_\pi)
$$
for all $F\in\mathcal E(G))$, where $F_\pi$ is as in \eqref{E:fcn-projection}.
It is clear that $U$ is a $K$-biinvariant element of $\mathcal E'(G)$.

For any $g_0\in G$, we have
\begin{align*}
R^*(c_{\widehat\mu}(\varphi))(g_0K)&=\int_K (c_{\widehat\mu}\varphi)(g_0k\cdot\xi_0)\,dk\\
&=\int_K\int_{\a} \varphi(g_0k\,\exp (-H)\cdot\xi_0)\,d\widehat\mu(H)\,dk\\
&=\int_K \int_X \varphi(g_0k\,\exp(-A(x,eM))\cdot\xi_0)\,d\mu(x)\,dk\\
&=\int_K \int_G \varphi(g_0k\,\exp H(g^{-1})\cdot\xi_0)\,dU(g)\,dk\\
&=\int_G\int_K \varphi(g_0\,k\,\exp H(g^{-1})\cdot\xi_0)\,dk\,dU(g)\\
\intertext{(by Fubini's Theorem for distributions)}
&=\int_G \int_K \varphi(g_0\, k\,k(g^{-1})\,\exp H(g^{-1})\cdot\xi_0)\,dk\,dU(g)\\
\intertext{(since $K$ is unimodular)}
&=\int_G \int_K \varphi(g_0\, k\, g^{-1}\cdot\xi_0)\,dk\,dU(g)\\
&=\int_K \int_G \varphi(g_0\, k\, g^{-1}\cdot\xi_0)\,dU(g)\,dk\\
&=\int_K \int_G \varphi(g_0\, (gk^{-1})^{-1}\cdot\xi_0)\,dU(g)\,dk\\
&=\int_K \int_G \varphi(g_0\, g^{-1}\cdot\xi_0)\,dU(g)\,dk\\
\intertext{(since $U$ is right $K$-invariant)}\\
&=\int_G \varphi(g_0\, g^{-1}\cdot\xi_0)\,dU(g)\\
&=\int_K \int_G \varphi(g_0\, (k^{-1}g)^{-1}\cdot\xi_0)\,dU(g)\,dk\\
\intertext{(since $U$ is left $K$-invariant)}
&=\int_K \int_G \varphi(g_0\, g^{-1}\,k\cdot\xi_0)\,dU(g)\,dk\\
&=\int_G \int_K \varphi(g_0\, g^{-1}\,k\cdot\xi_0)\,dk\,dU(g)\\
&=\int_G R^*\varphi(g_0g^{-1}K)\,dU(g)\\
&=(R^*\varphi * \mu)(g_0K)\\
&=c_\mu(R^*\varphi)(g_0K),
\end{align*}
proving the proposition.
%

\end{proof}

\section{The Slow Decrease Property on $\rr^n$.}
In this section, we will prove Theorem \ref{T:mainthm} in earnest. 

Suppose that $c_\mu\colon\mathcal E(X)\to\mathcal E(X)$ is surjective. Then for any $\psi\in \mathcal E_K(X)$, there is a $\varphi\in\mathcal E(X)$ for which $\varphi*\mu=\psi$. Replacing $\varphi$ by $\varphi^\natural$, we may assume that $\varphi\in\mathcal E_K(X)$. Thus $c_\mu$ maps $\mathcal E_K(X)$ onto $\mathcal{E}_K(X)$.

Now according to Proposition \ref{T:bijection1}, the linear map $T\colon\mathcal E_W(\a)\to\mathcal E_K(X)$ is a linear bijection.  Proposition \ref{T:commutative1} therefore shows that the convolution operator $c_{\mathcal A \mu}\colon\mathcal E_W(\a)\to\mathcal E_W(\a)$ is surjective. We claim that this implies that the distribution $\mathcal A\mu$ on $\a$ is invertible, that is, the holomorphic function $(\mathcal A\mu)^*(\lambda)=\widetilde\mu(\lambda)$ is slowly decreasing, which will of course prove Theorem \ref{T:mainthm}. Our claim will be proved in Theorem \ref{T:W-surjectivity} below. (This theorem will also imply that $c_{\mathcal A_\mu}\colon\mathcal E(\a)\to\mathcal E(\a)$ is surjective.)


The problem is now Euclidean and turns out not to depend on any specific properties of the Weyl group, so for convenience we'll replace $\a$ by $\rr^n$ and assume that $W$ is just a finite subgroup of the orthogonal group $O(n)$.

There is at least one point $x\in\rr^n$ such that the isotropy subgroup $W_x$ of $W$ at $x$ is $\{e\}$. 
In fact, for any $w\in W$, let $(\rr^n)^{w}$ be the subspace of $\rr^n$ consisting of points fixed by $w$. If $w\neq I$, then $(\rr^n)^{w}$ is a proper subspace of $\rr^n$. Hence the finite union
$$
\bigcup_{w\neq I} (\rr^n)^{w}
$$
is not all of $\rr^n$, and we may take $x$ to be any point in the complement of the set above. Note that the complement of the set above is an open cone, so we can scale $x$ so as to be close to the origin if necessary. The orbit $W\cdot x$ then consists of $|W|$ distinct points.


The spaces $\mathcal E_W(\rr^n)$ and $\mathcal E'_W(\rr^n)$ (consisting of $W$-invariant smooth functions and compactly supported distributions, respectively) are closed in $\mathcal E(\rr^n)$ and $\mathcal E'(\rr^n)$, respectively.


As mentioned earlier, our objective is to prove the following result.

\begin{theorem}\label{T:W-surjectivity}
Let $\Lambda\in\mathcal E'_W(\rr^n)$ and suppose that the convolution operator $c_\Lambda\colon \mathcal E_W(\rr^n)\to\mathcal E_W(\rr^n)$ is surjective. Then $\Lambda$ is invertible.
\end{theorem}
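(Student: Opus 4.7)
My plan is to deduce invertibility of $\Lambda$ by verifying H\"ormander's propagation-of-singular-supports characterization (Lemma \ref{T:slowdecrease1}), using a $W$-regular point to reduce the general convolution equation on $\mathcal{E}(\mathbb{R}^n)$ to a $W$-invariant one accessible via the surjectivity hypothesis.

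Fix a point $x_0 \in \mathbb{R}^n$ with $W_{x_0} = \{e\}$, guaranteed by the dimension argument preceding the theorem, together with an open neighborhood $U \ni x_0$ so small that the $|W|$ sets $w \cdot U$, $w \in W$, are pairwise disjoint. Fix a cutoff $\chi \in \mathcal{D}(U)$ equal to $1$ on a smaller neighborhood $V \ni x_0$. For each $\psi \in \mathcal{E}(\mathbb{R}^n)$, define its $W$-symmetric localization $\widetilde\psi \in \mathcal{D}_W(\mathbb{R}^n) \subset \mathcal{E}_W(\mathbb{R}^n)$ by
$$
\widetilde\psi(wx) \;=\; \chi(x)\,\psi(x), \qquad x \in U,\ w \in W,
$$
extended by zero off $\bigsqcup_w w \cdot U$. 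By the surjectivity hypothesis there exists $\widetilde\varphi \in \mathcal{E}_W(\mathbb{R}^n)$ with $\widetilde\varphi * \Lambda = \widetilde\psi$, and restricting to $V$ gives $\widetilde\varphi * \Lambda = \psi$ on $V$. Translation invariance of the convolution then produces an analogous local solution near any other point. Moreover, the Open Mapping Theorem applied to the continuous surjection $c_\Lambda \colon \mathcal{E}_W \to \mathcal{E}_W$ of Fr\'echet spaces provides seminorm bounds: for every compact $K$ and integer $N$, there are a compact $K'$, an integer $N'$, and a constant $C$ so that $\widetilde\varphi$ may be chosen with $\sup_K\max_{|\alpha|\le N}|\partial^\alpha\widetilde\varphi| \leq C\sup_{K'}\max_{|\beta|\le N'}|\partial^\beta\psi|$.

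The main obstacle is to combine this quantitative local solvability with H\"ormander's propagation criterion to conclude the global property characterizing invertibility. Given $u \in \mathcal{D}'(\mathbb{R}^n)$ whose convolution $u * \Lambda$ has bounded (or otherwise controlled) singular support, the local parametrices constructed above --- one near each point, with uniform bounds --- act as approximate inverses near the singular support of $u * \Lambda$, forcing the singular support of $u$ itself to remain in a controlled set. The delicate point is that convolution does not respect the localization used to build $\widetilde\psi$: the $|W|$ translated copies of the data can a priori interact through $\Lambda$, but the disjointness of the $w\cdot U$ together with the $W$-invariance of $\Lambda$ confines this interaction to a compact region depending only on $\mathrm{supp}\,\Lambda$ and $U$. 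Equivalently, since $\Lambda^*$ is itself $W$-invariant on $\mathbb{C}^n$, the slow decrease condition is $W$-equivariant, so the $W$-invariant estimates we can prove already suffice to verify it on a fundamental domain and hence on all of $\mathbb{R}^n$. Once the propagation of singular supports is verified in this way, Lemma \ref{T:slowdecrease1} yields invertibility of $\Lambda$, proving the theorem.
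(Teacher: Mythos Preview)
Your proposal has the right instincts---using a $W$-regular point $x_0$ and aiming at H\"ormander's singular-support criterion---but the core of the argument is missing. What you actually establish is local solvability with seminorm bounds: near $x_0$ you can solve $\widetilde\varphi * \Lambda = \psi$ with $\widetilde\varphi$ controlled by $\psi$. This is a statement about the \emph{range} of $c_\Lambda$. Lemma~\ref{T:slowdecrease1}, by contrast, asks for a dual regularity statement: if $S \in \mathcal{E}'(\mathbb{R}^n)$ and $\Lambda * S \in C^\infty$, must $S$ be smooth? The passage from one to the other requires a genuine duality argument, and your sentences about ``local parametrices acting as approximate inverses'' and interactions being ``confined'' do not supply one. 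Your $\widetilde\varphi$ is a preimage of a single right-hand side, not a parametrix; it gives no direct handle on $\text{sing supp}\,S$ when $\Lambda * S$ is smooth. The closing remark that slow decrease is $W$-equivariant is true but idle: you have produced no lower bound on $|\Lambda^*|$, on a fundamental domain or anywhere else. (There is also a technical wrinkle in your Open Mapping step: different target seminorms may force different choices of $\widetilde\varphi$, so the uniform-in-seminorm control you assert is not immediate.)

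The paper closes precisely this gap. It pairs the solvability hypothesis against test functions via $\int f\,v\,dx = S_f(\widecheck\Lambda * v)$ and applies Banach--Steinhaus to the resulting separately continuous bilinear form, obtaining
\[
\Bigl|\int f\,v\,dx\Bigr| \;\leq\; C \sum_{k\leq N}\sup|L^k f|\cdot \sum_{m\leq N}\sup|L^m(\widecheck\Lambda * v)|
\]
for all $W$-invariant $f,v\in C_c^\infty(\overline B_R)$. This is the substantive replacement for your vague parametrix step: it is a dual estimate in which only $\widecheck\Lambda * v$, not $v$ itself, appears on the right. One then takes $v = L^\ell(S*\chi_\delta)$ for a $W$-invariant $S$ with $\text{sing supp}\,S \subset W\cdot x_0$ and $\widecheck\Lambda*S\in C^\infty$, bounds the right-hand side uniformly in $\delta$, and passes to Sobolev norms to get $L^\ell S \in H_{(-2N-n)}$ for every $\ell$, hence $S\in C^\infty$. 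Your Open Mapping bound is morally the same raw input as the Banach--Steinhaus step, but you never carry out the dual pairing and regularization that make it bite on $S$.
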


The conclusion above is not immediate from Theorem \ref{T:Ehrenpreis60} (b), since all we have is that convolution with $\Lambda$ is surjective on $W$-invariant functions. (The converse is obvious: if $\Lambda$ is invertible, then $c_\Lambda\colon\mathcal E_W(\rr^n)\to\mathcal E_W(\rr^n)$ is onto.) 

To prove Theorem \ref{T:W-surjectivity}, we'll take a ``hard'' analytical approach, although a representation-theoretic approach may also be possible. See Remark \ref{R:rep-theoretic} at the end of this section.

Now in addition to the conditions in Theorem \ref{T:Ehrenpreis60}, there are several other equivalent formulations for a distribution
 $\Lambda\in\mathcal E'(\rr^n)$ to be invertible. These can be found in \cite{Ehrenpreis1960} as well as in Chapter 16 of H\"ormander's book \cite{Ho2}. Among these, we will use the following.


\begin{lemma}\label{T:slowdecrease1} (\cite{Ho2}, Theorems 16.3.9 and 16.3.10.)
For a distribution $\Lambda\in\mathcal E'(\rr^n)$, the following are equivalent:
\begin{enumerate}
\item $\Lambda$ is not invertible.
\item For every $x\in\rr^n$ one can find $S\in\mathcal E'(\rr^n)\cap (C(\rr^n)\setminus C^1(\rr^n))$ with $\text{sing supp }S=\{x\}$ and $\Lambda*S\in C^\infty(\rr^n)$.
\item There exists an $S\in\mathcal E'(\rr^n)$ such that $\Lambda*S\in C^\infty(\rr^n)$ but $S\notin C^\infty(\rr^n)$.
\end{enumerate} 
\end{lemma}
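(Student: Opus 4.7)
The easy implications come first.  $(2)\Rightarrow(3)$ is trivial, since any $S$ produced by $(2)$ at a chosen $x$ fulfills $(3)$.  For $(3)\Rightarrow(1)$, I would argue the contrapositive.  If $\Lambda$ is invertible, Theorem \ref{T:Ehrenpreis60}(d) yields a fundamental solution $E\in\mathcal D'(\rr^n)$ with $E*\Lambda=\delta_0$.  Any $S\in\mathcal E'(\rr^n)$ with $\Lambda*S\in C^\infty(\rr^n)$ then has $\Lambda*S\in\mathcal D(\rr^n)$, so associativity of convolution (which is valid here because only $E$ may fail to have compact support) gives
$$
S=\delta_0*S=(E*\Lambda)*S=E*(\Lambda*S),
$$
a convolution of a distribution with a test function, hence smooth.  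This forces every such $S$ to be $C^\infty$, contradicting $(3)$.

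The substance of the lemma is the implication $(1)\Rightarrow(2)$, which I would prove along the lines of Hörmander's argument in \cite[\S16.3]{Ho2}.  By translation invariance we may take $x=0$.  Since $\Lambda$ is not invertible, $\Lambda^*$ fails the slow-decrease condition \eqref{E:slow-decrease-cond}; Ehrenpreis's minimum modulus theorem (Theorem 5 of \cite{Ehrenpreis1955}) transfers the failure from complex to real balls, furnishing sequences $\xi_k\in\rr^n$ with $\|\xi_k\|\to\infty$ and radii $r_k\asymp\log\|\xi_k\|$ such that
$$
|\Lambda^*(\xi)|\leq (\|\xi_k\|+k)^{-k}\qquad\text{throughout }B(\xi_k,r_k)\subset\rr^n.
$$
I would then pick cutoffs $\phi_k\in\mathcal D(\rr^n)$ supported in $B(\xi_k,r_k/2)$ with uniform derivative bounds $|\partial^\alpha\phi_k|\leq C_\alpha\,r_k^{-|\alpha|}$, and assemble $S$ from the inverse Fourier transforms of the $\phi_k$ with suitable scalar weights $c_k>0$, truncating by a cutoff $\chi\in\mathcal D(\rr^n)$ equal to $1$ near $0$ to secure compact support.

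The three properties demanded of $S$ would then be verified on the Fourier side.  First, $(\Lambda*S)^*$ is rapidly decreasing because $\Lambda^*$ is super-polynomially small on each $\mathrm{supp}\,\phi_k$ while $S^*$ is polynomially controlled there, forcing $\Lambda*S\in C^\infty(\rr^n)$.  Second, the continuity-but-not-$C^1$ behavior of $S$ is obtained by calibrating the weights at the borderline $\sum c_k r_k^n<\infty$ but $\sum c_k r_k^n\|\xi_k\|=\infty$; this puts $S^*$ in $L^1$ but $\xi\,S^*\notin L^1$.  Third, the Schwartz decay of each wave packet $(\phi_k)^\vee$ away from $0$, combined with the cutoff $\chi$, localizes the singular support to exactly $\{0\}$.

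The principal obstacle is the simultaneous calibration of four tensions: compact support, singular support exactly $\{0\}$, regularity at the threshold $C^0\setminus C^1$, and $C^\infty$ output after convolution with $\Lambda$.  The pivotal ingredient is the minimum modulus theorem, which converts the slow-decrease failure (a priori only visible on complex balls) into pointwise smallness of $\Lambda^*$ on honest Euclidean balls of logarithmic radius — large enough to host the Fourier-localized wave packets of the construction.  Without that transfer, the bad frequencies of $\Lambda^*$ would be unreachable by a distribution $S$ with real-supported Fourier transform, and the construction would collapse.
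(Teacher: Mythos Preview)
The paper does not prove this lemma at all: it is quoted verbatim as a known result from H\"ormander (\cite{Ho2}, Theorems~16.3.9 and~16.3.10), with no argument supplied.  So there is no ``paper's own proof'' to compare your attempt against; the paper simply invokes the statement as a black box in the proof of Lemma~\ref{T:W-invertible-cond}.

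That said, a couple of remarks on your sketch.  Your treatment of the easy implications $(2)\Rightarrow(3)\Rightarrow(1)$ is fine.  In the hard direction $(1)\Rightarrow(2)$ your appeal to the minimum modulus theorem is misplaced: the negation of slow decrease already asserts that $|\Lambda^*|$ is uniformly small on the entire \emph{complex} ball $B_{\cc^n}(\xi_k,r_k)$, which trivially contains the real ball you need.  The minimum modulus theorem is relevant only in the opposite direction (showing that, when $\Lambda^*$ \emph{is} slowly decreasing, the witness $\zeta$ in \eqref{E:slow-decrease-cond} can be taken real).  Finally, the borderline calibration you propose --- $\sum c_k r_k^n<\infty$ but $\sum c_k r_k^n\|\xi_k\|=\infty$ --- does not by itself force $S\notin C^1$, since the oscillatory factors $e^{i\langle\xi_k,\cdot\rangle}$ could in principle produce cancellation in first derivatives; H\"ormander's actual construction arranges the phases and weights more carefully to exclude this.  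These are repairable details, and your overall outline is the standard one.
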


The lemma above  can be made more or less $W$-invariant as follows.

\begin{lemma}\label{T:W-invertible-cond}  Fix any point $x_0\in\rr^n$ such that $W_{x_0}=\{e\}$. A distribution $\Lambda\in\mathcal E'_W(\rr^n)$ is invertible if and only if for all $S\in\mathcal E'_W(\rr^n)$ with $\text{sing supp }S\subset W\cdot x_0$, $\Lambda*S\in C^\infty(\rr^n)$ implies that $S\in C^\infty(\rr^n)$.
\end{lemma}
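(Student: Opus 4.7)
The forward direction is immediate from Lemma \ref{T:slowdecrease1}: if $\Lambda$ is invertible then, by the equivalence of (1) and (3), $\Lambda*S\in C^\infty$ forces $S\in C^\infty$ for \emph{every} $S\in\mathcal E'(\rr^n)$, and in particular for those in $\mathcal E'_W(\rr^n)$ whose singular support lies in $W\cdot x_0$. So the content of the lemma is in the converse.

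For the converse, I will argue by contrapositive. Assuming $\Lambda$ is not invertible, I will exhibit a distribution $S\in\mathcal E'_W(\rr^n)$ with $\text{sing supp } S\subset W\cdot x_0$ such that $\Lambda*S\in C^\infty$ but $S\notin C^\infty$. The starting point will be Lemma \ref{T:slowdecrease1}(2) applied at the point $x_0$, which produces a distribution $S_0\in\mathcal E'(\rr^n)\cap(C(\rr^n)\setminus C^1(\rr^n))$ with $\text{sing supp }S_0=\{x_0\}$ and $\Lambda*S_0\in C^\infty(\rr^n)$. I will then $W$-symmetrize by setting
$$
S:=\sum_{w\in W} S_0^{w},
$$
where $S_0^w$ denotes the pushforward of $S_0$ under $w\in W\subset O(n)$.

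The remaining verifications are mostly mechanical. $W$-invariance of $S$ is automatic. Since pushforward by $w$ moves the singular support by $w$, one has $\text{sing supp }S_0^w=\{w\cdot x_0\}$, so $\text{sing supp } S\subset W\cdot x_0$. Because $\Lambda$ is itself $W$-invariant and convolution on $\rr^n$ intertwines the action of $O(n)$, we have $\Lambda*S_0^w=(\Lambda*S_0)^w\in C^\infty$, and summing gives $\Lambda*S\in C^\infty$.

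The one step that requires genuine care — and the reason the hypothesis $W_{x_0}=\{e\}$ is built into the statement — is showing that $S\notin C^\infty$, i.e.\ that the singularities do not cancel under symmetrization. Triviality of the isotropy $W_{x_0}$ guarantees that the $|W|$ points of $W\cdot x_0$ are pairwise distinct. Choose a small neighborhood $U$ of $x_0$ disjoint from $\{w\cdot x_0:w\neq e\}$; then every $S_0^w$ with $w\neq e$ is smooth on $U$, so $S|_U-S_0|_U\in C^\infty(U)$. Since $S_0$ fails to be $C^1$ at $x_0\in U$, the same is true of $S$, so $S\notin C^\infty(\rr^n)$. This completes the construction and contradicts the hypothesis. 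The whole argument thus reduces to combining Lemma \ref{T:slowdecrease1}(2) with a symmetrization trick, the only subtlety being the distinctness of the orbit points.
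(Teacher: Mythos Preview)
Your proof is correct and follows essentially the same route as the paper: both directions invoke Lemma~\ref{T:slowdecrease1} in the same way, and for the converse you build the $W$-invariant counterexample by taking the distribution $S_0$ from part~(2) at $x_0$ and summing its $W$-translates, using $W_{x_0}=\{e\}$ to ensure the singularities at the distinct orbit points do not cancel. The paper's argument is the same, only phrased a bit more tersely.
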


\begin{proof}
Suppose that $\Lambda$ is invertible. If $\Lambda*S\in C^\infty(\rr^n)$, then (3) in Lemma \ref{T:slowdecrease1} implies that $S\in C^\infty(\rr^n)$.


Conversely, suppose that $\Lambda$ is not invertible. Then (2) in Lemma \ref{T:slowdecrease1} implies that there is an $S_1\in\mathcal E'(\rr^n)\cap (C(\rr^n)\setminus C^1(\rr^n))$, with $\text{sing supp }S_1=\{x_0\}$, such that $\Lambda*S_1\in C^\infty(\rr^n)$. 

Let $S=\sum_{w\in W} w\cdot S_1$. Then $S$ is not $C^1$ at any orbit point $w\cdot x_0$, since $w\cdot S_1$ is not $C^1$ at $w\cdot x_0$, but $w'\cdot S_1$ is $C^\infty$ in a neighborhood of $w\cdot x_0$, for any $w'\neq w$. Thus $S\in\mathcal E'_W(\rr^n)$, $\text{sing supp }S=W\cdot x_0$, and
$$
\Lambda*S=\sum_{w\in W} w\cdot (\Lambda*S_1)\in C^\infty(\rr^n).
$$
\end{proof}

For any $s\in\rr$, let $H_{(s)}$ denote the Sobolev space with index $s$, equipped with norm
$$
\|u\|_{(s)}=\left((2\pi)^{-n}\,\int_{\rr^n} (1+\|\xi\|^2)^s\,|u^*(\xi)|^2\,d\xi\right)^{1/2}
$$ 
Let $L$ be the Laplace operator on $\rr^n$. 

\begin{lemma}\label{T:sobolev1} For any $N\in\zz^+$, there exists a constant $C$  such that
$$
\sup |L^k u(x)|\leq C\,\|u\|_{(2N+n)},
$$
for all $u\in \mathcal S(\rr^n)$ and all $k\leq N$.
\end{lemma}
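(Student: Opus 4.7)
The plan is to reduce everything to the standard Sobolev embedding on $\mathbb{R}^n$ via the Fourier transform. Since $u\in\mathcal{S}(\mathbb{R}^n)$, Fourier inversion gives
$$
L^k u(x) = (2\pi)^{-n}\int_{\mathbb{R}^n} (-\|\xi\|^2)^k\, u^*(\xi)\, e^{i\langle x,\xi\rangle}\,d\xi,
$$
so that for every $x\in\mathbb{R}^n$ and every $k\leq N$,
$$
|L^k u(x)|\;\leq\; (2\pi)^{-n}\int_{\mathbb{R}^n} \|\xi\|^{2k}\,|u^*(\xi)|\,d\xi\;\leq\;(2\pi)^{-n}\int_{\mathbb{R}^n}(1+\|\xi\|^2)^{N}\,|u^*(\xi)|\,d\xi,
$$
using $\|\xi\|^{2k}\le(1+\|\xi\|^2)^k\le(1+\|\xi\|^2)^N$. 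The right-hand side is independent of $x$ and of $k$, which already gives the uniformity in both $x$ and $k$ that the lemma requires.

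Next I would factor the weight as $(1+\|\xi\|^2)^N = (1+\|\xi\|^2)^{-n/2}\cdot(1+\|\xi\|^2)^{N+n/2}$ and apply the Cauchy--Schwarz inequality:
$$
\int_{\mathbb{R}^n}(1+\|\xi\|^2)^N|u^*(\xi)|\,d\xi \;\leq\; \Bigl(\int_{\mathbb{R}^n}(1+\|\xi\|^2)^{-n}\,d\xi\Bigr)^{1/2}\Bigl(\int_{\mathbb{R}^n}(1+\|\xi\|^2)^{2N+n}|u^*(\xi)|^2\,d\xi\Bigr)^{1/2}.
$$
The first factor is a finite constant $C_0$, since in polar coordinates the integrand behaves like $r^{-2n}\cdot r^{n-1}=r^{-n-1}$ at infinity. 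The second factor equals $(2\pi)^{n/2}\|u\|_{(2N+n)}$ by definition of the Sobolev norm.

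Combining these estimates yields
$$
\sup_{x\in\mathbb{R}^n}|L^k u(x)|\;\leq\;(2\pi)^{-n/2}\,C_0\,\|u\|_{(2N+n)}\qquad(k\leq N),
$$
which is the desired inequality with $C=(2\pi)^{-n/2}C_0$. There is no real obstacle; the only point that needs care is choosing the Cauchy--Schwarz split so that the $\xi$-integral of the weight converges, and this is precisely why the Sobolev index $2N+n$ (rather than anything smaller) is needed on the right.
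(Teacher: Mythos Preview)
Your proof is correct and follows essentially the same approach as the paper: Fourier inversion, the pointwise bound $\|\xi\|^{2k}\le(1+\|\xi\|^2)^N$, and a Cauchy--Schwarz split with weight $(1+\|\xi\|^2)^{-n/2}$ to pass to the Sobolev norm $\|\cdot\|_{(2N+n)}$. The paper phrases the split with a parameter $s>n/2$ and then sets $s=n$, while you go directly to $s=n$; the resulting constant $C=(2\pi)^{-n/2}\bigl(\int(1+\|\xi\|^2)^{-n}\,d\xi\bigr)^{1/2}$ is the same in both arguments.
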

\begin{proof}
For any  $s>n/2$ and  $k\leq N$, the Fourier inversion formula and the H\"older Inequality imply that
\begin{align*}
\sup |L^k(x)|&\leq (2\pi)^{-n}\,\int_{\rr^n} (1+\|\xi\|^2)^k\,|u^*(\xi)|\,d\xi\\
&\leq (2\pi)^{-n}\,\int_{\rr^n} (1+\|\xi\|^2)^{-s/2}\,(1+\|\xi\|^2)^{N+s/2}\,|u^*(\xi)|\,d\xi\\
&\leq C\,\|u\|_{(2N+s)},
\end{align*}
where $C=(2\pi)^{-n/2}\,(\int (1+\|\xi\|^2)^{-s}\,d\xi)^{1/2}$. Letting $s=n$ gives us the result.

\end{proof}

The following lemma, which is modeled on \cite{Ho2}, Theorem 16.5.1, will imply Theorem \ref{T:W-surjectivity}.

\begin{lemma}\label{T:W-inv-surj}
Let $\Lambda\in\mathcal E'_W(\rr^n)$. Suppose that for any $W$-invariant $f\in C_c^\infty(\rr^n)$, there is a $W$-invariant $S_f\in\mathcal D'(\rr^n)$ such that $\Lambda*S_f=f$. Then $\Lambda$ is invertible.
\end{lemma}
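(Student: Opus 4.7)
My plan is to prove Lemma~\ref{T:W-inv-surj} by contradiction via Lemma~\ref{T:W-invertible-cond}, following the approach of Hörmander's proof of~\cite{Ho2}, Theorem 16.5.1, but adapted throughout to respect $W$-invariance. Suppose $\Lambda$ is \emph{not} invertible. Fix $x_0 \in \rr^n$ with trivial $W$-isotropy $W_{x_0}=\{e\}$ (such $x_0$ exists by the earlier discussion). Lemma~\ref{T:W-invertible-cond} then supplies $S \in \mathcal E'_W(\rr^n)$ with $\text{sing supp }S \subset W\cdot x_0$ and $h:=\Lambda*S \in C^\infty(\rr^n)$, while $S \notin C^\infty(\rr^n)$. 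Because $\Lambda$ and $S$ have compact support, $h$ lies in $C_c^\infty(\rr^n)^W$. The rest of the argument will use the surjectivity hypothesis to force $S\in C^\infty$, contradicting this choice.

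The first step is to extract uniform Sobolev estimates via Baire category. Fix $R>0$ with $\text{supp}\,h \subset \bar B_R$ and a $W$-invariant cutoff $\chi \in C_c^\infty$ that equals $1$ on a much larger ball $\bar B_{M}$ (with $M$ exceeding $R$ plus the diameter of $\text{supp}\,\Lambda$). For each pair of positive integers $K,j$, set
\[
A_{K,j} \;=\; \bigl\{\, f \in \mathcal D_W(\bar B_R) \,:\, \exists\, u \in \mathcal D'_W \text{ with } \Lambda * u = f \text{ and } \|\chi u\|_{(-K)} \leq j \,\bigr\}.
\]
Each $A_{K,j}$ is closed in the Fréchet space $\mathcal D_W(\bar B_R)$ by the weak-$*$ compactness of balls in $H^{(-K)}$, and by hypothesis $\mathcal D_W(\bar B_R) = \bigcup_{K,j} A_{K,j}$. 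The Baire category theorem gives some $A_{K_0,j_0}$ with nonempty interior, and a standard symmetrization/linearization produces constants $K,N,C$ such that for every $f\in\mathcal D_W(\bar B_R)$ there is $u\in\mathcal D'_W$ with $\Lambda * u = f$ and $\|\chi u\|_{(-K)} \leq C\|f\|_{(N)}$.

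Next I would dualize. Applying the same argument to $\check\Lambda \in \mathcal E'_W$ (which also satisfies the hypothesis, as $W$-invariance is preserved by $x\mapsto -x$ and the convolution structure is symmetric), we get: for every $\phi \in C_c^\infty(\bar B_R)^W$ there is $\psi \in \mathcal D'_W$ with $\check\Lambda*\psi=\phi$ and $\|\chi\psi\|_{(-K)} \leq C\|\phi\|_{(N)}$. Using the adjoint identity and $\chi\equiv 1$ on $\text{supp}\,h$,
\[
\langle S,\phi\rangle \;=\; \langle S,\check\Lambda*\psi\rangle \;=\; \langle \Lambda*S,\psi\rangle \;=\; \langle h,\chi\psi\rangle,
\]
and Sobolev duality $H^{(K)}\times H^{(-K)}\to\cc$ yields
\[
|\langle S,\phi\rangle| \;\leq\; \|h\|_{(K)}\,\|\chi\psi\|_{(-K)} \;\leq\; C\|h\|_{(K)}\|\phi\|_{(N)}
\]
for all $W$-invariant $\phi\in C_c^\infty(\bar B_R)$. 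Thus $S$ acts continuously in the $H^{(N)}$-topology on $W$-invariant test functions.

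The main obstacle---the step I expect to be most delicate---is to boot-strap this single Sobolev-regularity bound into full $C^\infty$ regularity of $S$. The idea is to iterate the estimate with $(S,h)$ replaced by $(\Delta^k S, \Delta^k h)$, where $\Delta$ is the Laplacian on $\rr^n$ (which is $W$-invariant since $W \subset O(n)$). Since $h \in C^\infty$, $\|\Delta^k h\|_{(K)} < \infty$ for every $k$, so the previous argument applied to $\Delta^k S \in \mathcal E'_W$ (noting $\Lambda * \Delta^k S = \Delta^k h$) gives $|\langle S, \Delta^k\phi\rangle| \leq C_k\|\phi\|_{(N)}$. Combining these bounds with the elliptic regularity of $\Delta$ (via Sobolev-norm equivalences of the form $\|\phi\|_{(N)} + \|\Delta^k\phi\|_{(N)} \sim \|\phi\|_{(N+2k)}$) shows that $|\langle S,\phi\rangle|$ is controlled by arbitrarily weak Sobolev norms of $\phi$. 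Hence $S\in H^{(s)}_{\text{loc}}$ for every $s$, and by Sobolev embedding $S \in C^\infty(\rr^n)$. This contradicts our choice of $S$ and proves the lemma.
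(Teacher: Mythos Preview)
Your overall strategy matches the paper's: both argue by contradiction through Lemma~\ref{T:W-invertible-cond}, extract a uniform estimate from the solvability hypothesis, then iterate with powers of the Laplacian (a $W$-invariant elliptic operator) to force $S\in C^\infty$.  The passage from a $W$-invariant Sobolev bound to one valid for all test functions (via averaging over $W$) and the final elliptic bootstrap are exactly what the paper does.

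The substantive difference is in how the estimate is extracted, and here your sketch has a genuine gap.  The paper does not run Baire category on sublevel sets; instead it applies the Banach--Steinhaus theorem to the bilinear form $(f,v)\mapsto\int fv$ on $F_W\times V_W$, where $V_W=C_c^\infty(\overline B_R)^W$ carries the (weaker) topology generated by the seminorms $v\mapsto\sup|L^m(\widecheck\Lambda*v)|$.  Separate continuity in $v$ is immediate from the identity $\int fv=S_f(\widecheck\Lambda*v)$, and since $F_W$ is Fr\'echet and $V_W$ metrizable, joint continuity follows.  This yields the estimate~\eqref{E:estimate1} directly and avoids any compactness issue.

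In your version, the closedness of $A_{K,j}$ is not justified by the argument you give.  If $f_n\to f$ with witnesses $u_n$ and $\chi u_{n_k}\rightharpoonup v$ weakly in $H^{(-K)}$, you only recover $\Lambda*v=f$ on the region where $\chi\equiv 1$, not globally; moreover there is no reason that $\chi v=v$, so neither the equation $\Lambda*u=f$ nor the bound $\|\chi u\|_{(-K)}\le j$ is inherited in the limit.  One can try to repair this by redefining $A_{K,j}$ with a local convolution condition and tracking supports more carefully, but then your choice of $R$ (containing $\text{supp}\,h$ rather than $\text{supp}\,S$) no longer matches what the dualization step needs: you require $\widecheck\Lambda*\psi=\phi$ on $\text{supp}\,S$, and your test functions must be allowed to concentrate near $W\cdot x_0\subset\text{supp}\,S$.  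The paper sidesteps all of this by choosing $R$ so that $\text{supp}\,S\subset B_R$, mollifying $S$ to $S_\delta\in V_W$, and applying the bilinear estimate with $v=L^\ell S_\delta$ before letting $\delta\to 0$.
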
 
\begin{proof}
If $v\in C_c^\infty(\rr^n)$ is $W$-invariant, then for $f$ and $S_f$ as above, we have
\begin{equation}\label{E:convol1}
\int_{\rr^n} f(x)\,v(x)\,dx=S_f(\widecheck\Lambda*v),
\end{equation}
where $\widecheck\Lambda$ is the reflection of $\Lambda$ through the origin.  We claim that for any closed ball $\overline B_R$ of radius $R>0$ centered at the origin, there are constants $C$ and $N$ (depending on $R$) such that
\begin{equation}\label{E:estimate1}
\left|\int_{\rr^n} f\,v\,dx\right|\leq C\sum_{k\leq N} \sup |L^k f|\cdot\sum_{m\leq N} \sup|L^m(\widecheck\Lambda*v)|,
\end{equation}
for all $W$-invariant $f$ and $v$ in $C_c^\infty(\overline B_R)$.

To prove \eqref{E:estimate1}, let $F$ be the vector space $C_c^\infty(\overline B_R)$ equipped with the topology defined by the seminorms $f\mapsto \sup |D^\alpha f|$, for all multiindices $\alpha$, and let $V$ be the vector space $C_c^\infty(\overline B_R)$ equipped with the topology defined by the seminorms $v\mapsto \sup |D^\beta(\widecheck\Lambda*v)|$, for all multiindices $\beta$. (Note that $\widecheck\Lambda*v$ is generally supported in a ball larger than $\overline B_R$, but this does not matter.)  Then $F$ and $V$ are metrizable topological vector spaces and $F$ is complete. 

The topologies on $F$ and $V$ are equally well defined by the seminorms
\begin{equation}\label{E:seminorm}
f\mapsto \sup|L^k f|\quad\text{and}\quad v\mapsto \sup |L^m(\widecheck\Lambda*v)|,\qquad k,m\in\zz^+.
\end{equation}
This is because $\|\xi\|^{2k}\,|f^*(\xi)|\leq \text{vol}\,(B_R)\cdot \sup|L^k f|$ for any $\xi\in\rr^n$ and for any $k$. Hence for any multiindex $\beta$, 
we can choose $m\geq |\beta|/2$ to obtain
\begin{align*}
|D^\beta f(x)|&\leq (2\pi)^{-n}\,\int_{\rr^n} |\xi^\beta|\,|f^*(\xi)|\,d\xi\\
&\leq (2\pi)^{-n}\,\int_{\rr^n} (1+\|\xi\|^2)^m\, |f^*(\xi)|\,d\xi\\
&= (2\pi)^{-n}\,\int_{\rr^n} (1+\|\xi\|^2)^{-n}\,(1+\|\xi\|^2)^{m+n}\,|f^*(\xi)| \,d\xi\\
&\leq (2\pi)^{-n}\,\text{vol}\,(B_R)\cdot\int_{\rr^n} (1+\|\xi\|^2)^{-n}\,d\xi\cdot \sup |(1+L)^{m+n}\,f|\\
&=C\,\sup |(1+L)^{m+n}\,f|.
\end{align*}

We will use the seminorms \eqref{E:seminorm} for $F$ and $V$. Let $F_W$ and $V_W$ be the subspaces of $F$ and $V$ consisting of $W$-invariant elements.  Then $F_W$ and $V_W$ are closed subspaces of $F$ and $V$ with topologies given by the seminorms \eqref{E:seminorm}. Now consider the bilinear form on $F_W\times V_W$ given by
\begin{equation}\label{E:bilinear}
\langle f,v\rangle=\int_{\rr^n} f\,v\,dx.
\end{equation}
For each fixed $v$, the bilinear form \eqref{E:bilinear} is trivially continuous with respect to $f$, and for each fixed $f$, \eqref{E:convol1} implies that the form is continuous with respect to $v$. Since $F_W$ is an $F$-space and $V_W$ is metrizable, the Banach-Steinhaus Theorem implies that the bilinear form \eqref{E:bilinear} is jointly continuous on $F_W\times V_W$, proving \eqref{E:estimate1}. (See \cite{RudinFA}, Theorem 2.17.)

Now suppose that $x_0$ is a point such that $W_{x_0}=\{e\}$. Since $\Lambda$ is invertible if and only if $\widecheck\Lambda$ is invertible, Lemma \ref{T:W-invertible-cond} shows that it suffices to prove that if $S\in\mathcal E'_W(\rr^n)$ with $\text{sing supp }S\subset W\cdot x_0$, then $\widecheck\Lambda*S\in C^\infty(\rr^n)$ implies that $S\in C^\infty(\rr^n)$.

So fix such an $S$. Then fix an open ball $B_R$ containing the support of $S$, and then consider the subspaces $F_W$ and $V_W$ of $C_c^\infty(\overline B_R)$ as above.   Let $\chi$ be any nonnegative radial compactly supported $C^\infty$ function with $\int_{\rr^n}\chi\,dx=1$, and set $\chi_\delta(x)=\delta^{-n}\,\chi(x/\delta)$. For small $\delta$, the convolution
$$
S_\delta=S*\chi_\delta
$$
is an element of $V_W$. Moreover, for any $m\in\zz^+$, we have
\begin{equation}\label{E:estimate2}
\sup |L^m(\widecheck\Lambda*S_\delta)|= \sup |L^m(\widecheck\Lambda*S)*\chi_\delta|\leq \sup |L^m(\widecheck\Lambda*S)|<\infty.
\end{equation}
Let $\ell\in\zz^+$. Then for small $\delta$, applying the estimate \eqref{E:estimate1} to $L^\ell S_\delta$ and using \eqref{E:estimate2} above, we obtain
\begin{align}
\left|\int_{\rr^n} f\,(L^\ell(S*\chi_\delta))\,dx\right|&\leq C\,\sum_{m\leq N} \sup |L^m(\widecheck\Lambda*L^\ell(S*\chi_\delta))|\cdot \sum_{k\leq N}\sup |L^k f|\nonumber\\
&\leq C\,\sum_{m\leq N} \sup |L^{\ell+m}(\widecheck\Lambda*S)|\cdot \sum_{k\leq N} \sup |L^k f|\label{E:estimate3}\\
&=C_\ell\,\sum_{k\leq N} \sup |L^k f|,\nonumber
\end{align}
for all $f\in F_W$, where $C_\ell$ is the product of the first two factors in \eqref{E:estimate3}.
Lemma \ref{T:sobolev1} then shows that there is  a constant $D_\ell$  such that
$$
\left|\int_{\rr^n} f\,(L^\ell S*\chi_\delta)\,dx\right|\leq D_\ell\,\|f\|_{(2N+n)},
$$
for all $f\in F_W$ and all small $\delta>0$. Letting $\delta\to 0$, we obtain
\begin{equation}\label{E:sobolev2}
|L^\ell S\, (f)|\leq D_\ell \, \|f\|_{(2N+n)},
\end{equation}

for all $f\in F_W$. 

Now let $g\in F$.  Since $L^\ell S$ is $W$-invariant, we can apply the estimate \eqref{E:sobolev2} to the average
$f=(\sum_{w\in W} g^w)/|W|\in F_W$ to obtain
\begin{align}
|L^\ell S\,(g)|&=|L^\ell S\,(f)|\nonumber\\
&\leq D_\ell \|f\|_{(2N+n)}\nonumber\\
&\leq \frac{D_\ell}{|W|}\,\sum_{w\in W} \|g^w\|_{(2N+n)}\nonumber\\
&= D_\ell \,\|g\|_{(2N+n)}.\label{E:sobolev3}
\end{align}

Now fix a function $\psi\in C_c^\infty(\overline B_R)$ that is identically $1$ in a neighborhood of $\text{supp }S$. Then for all $g\in\mathcal S(\rr^n)$, \eqref{E:sobolev3} shows that
\begin{align*}
|L^\ell S(g)|&=|L^\ell S(\psi g)|\\
&\leq D_\ell \,\|\psi\,g\|_{(2N+n)}\\
&\leq C'\,\|g\|_{(2N+n)},
\end{align*}
for all $g\in\mathcal S(\rr^n)$, where $C'$ depends on $\ell,\,R$, and $\psi$. (Here we have used the fact that $g\mapsto \psi\,g$ is bounded on $H_{(s)}$, for all $s\in\rr$. See, e.g., \cite{Folland}, Proposition 6.12.) Since $\mathcal S(\rr^n)$ is dense in $H_{(2N+n)}$, $L^\ell S$ extends uniquely to a bounded linear functional on $H_{(2N+n)}$. Hence $L^\ell S\in H_{(-2N-n)}$ for all $\ell$, and this shows that $S\in C^\infty(\rr^n)$.

This finishes the proof of Lemma \ref{T:W-inv-surj}, and completes the proof of Theorem \ref{T:W-surjectivity}, as well as our main result Theorem \ref{T:mainthm}.
\end{proof}

\begin{remark}\label{R:rep-theoretic}
 It may also be possible to prove that $\Lambda*\mathcal{E}(\rr^n)=\mathcal E(\rr^n)$ as follows. Both $\mathcal E(\rr^n)$ and $\mathcal E'(\rr^n)$ are (finite) direct sums of $W$-isotypic components:
\begin{equation}\label{E:isotypic}
\mathcal E(\rr^n)=\sum_{\pi\in \widehat W} \mathcal E_\pi(\rr^n),\qquad \mathcal E'(\rr^n)=\sum_{\pi\in \widehat W} \mathcal E'_\pi(\rr^n),
\end{equation}
where $\widehat{W}$ is the unitary dual of $W$. Here of course, $\mathcal E_W(\rr^n)$ and $\mathcal E'_W(\rr^n)$ are the isotypic components corresponding to the trivial representation of $W$. If one can prove that
\begin{equation}\label{E:Ktype}
\mathcal E_W(\rr^n)*\mathcal E'_\pi(\rr^n)=\mathcal E_\pi(\rr^n),
\end{equation}
then it will follow that
$$
\Lambda*\mathcal E_\pi(\rr^n)=\Lambda*\mathcal E_W(\rr^n)*\mathcal E'_\pi(\rr^n)=\mathcal E_W(\rr^n)*\mathcal E'_\pi(\rr^n)=\mathcal{E}_\pi(\rr^n),
$$
and, by \eqref{E:isotypic}, this implies that $\Lambda*\mathcal E(\rr^n)=\mathcal E(\rr^n)$.   It can be shown that $\mathcal E_W(\rr^n)*\mathcal E'_\pi(\rr^n)$ is dense in $\mathcal E_\pi(\rr^n)$. (In fact, one only needs to convolve $\mathcal E_W(\rr^n)$ with finitely many distributions in $\mathcal E'_\pi(\rr^n)$ to achieve density.) However, as of this writing the authors could not obtain a proof of the equality in \eqref{E:Ktype}.
\end{remark}

\section{Convolutions with General Distributions}

We now extend our results to general distributions $\mu\in \mathcal{E'}(X)$.  Recall from Section 2 that  we defined $\mu^\natural$ to be the average of the left $K$-translates of $\mu$. As observed in that section, we have $c_\mu=c_{\mu^\natural}$.

%

\begin{theorem}\label{T:surjectivity2}
	Let $\mu\in\mathcal E'(X)$. Then the right convolution operator $c_\mu\colon f\mapsto f*\mu$ from $\mathcal E(X)$ to $\mathcal E(X)$ is surjective if and only if the holomorphic function on $\a^*_\cc$ given by
	$$
	\lambda\mapsto\int_B \widetilde\mu(\lambda,b)\,db
	$$
	is slowly decreasing.
\end{theorem}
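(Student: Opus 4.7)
The strategy is to reduce the general case to the $K$-invariant one via the left-$K$ average $\mu^\natural$, and then to invoke Theorem \ref{T:mainthm} together with the surjectivity half of (\cite{CGK2017}, Theorem 5.1). The bridge between the two sides of the claimed equivalence is the identity
\begin{equation}\label{E:avg-sphFT-plan}
\widetilde{\mu^\natural}(\lambda)=\int_B \widetilde\mu(\lambda,b)\,db,\qquad \lambda\in\a^*_\cc,
\end{equation}
so that the slow-decrease hypothesis on the right-hand side of the theorem is precisely the slow-decrease property of the spherical Fourier transform of $\mu^\natural$.

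First, as recorded in Section 2, we have $c_\mu=c_{\mu^\natural}$, and $\mu^\natural\in\mathcal E'_K(X)$. Hence $c_\mu\colon\mathcal E(X)\to\mathcal E(X)$ is surjective if and only if $c_{\mu^\natural}\colon\mathcal E(X)\to\mathcal E(X)$ is surjective. Second, since $\mu^\natural$ is $K$-invariant, Theorem \ref{T:mainthm} (the ``only if'' direction) together with \cite{CGK2017}, Theorem 5.1 (the ``if'' direction) gives that $c_{\mu^\natural}$ is surjective on $\mathcal E(X)$ if and only if the spherical Fourier transform $\widetilde{\mu^\natural}$ is slowly decreasing on $\a^*_\cc$.

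Third, I would verify the identity \eqref{E:avg-sphFT-plan}. By definition $\mu^\natural=\int_K \mu^{\ell_k}\,dk$, so for any $b\in B$,
$$
\widetilde{\mu^\natural}(\lambda,b)=\int_K\int_X e^{(-i\lambda+\rho)A(k\cdot x,b)}\,d\mu(x)\,dk,
$$
which is legitimate by Fubini since $\mu$ has compact support. A direct computation using $A(x,b)=-H(g^{-1}k_0)$ for $x=gK$ and $b=k_0 M$ yields the covariance identity $A(k\cdot x,b)=A(x,k^{-1}\cdot b)$. Substituting this, changing the variable $k\mapsto k^{-1}$ in the $K$-integral, and pushing the Haar measure on $K$ forward to the normalized measure $db$ on $B=K/M$, we obtain
$$
\widetilde{\mu^\natural}(\lambda,b)=\int_X\int_B e^{(-i\lambda+\rho)A(x,b')}\,db'\,d\mu(x)=\int_B \widetilde\mu(\lambda,b')\,db',
$$
which is independent of $b$ and agrees with the left-hand side of \eqref{E:avg-sphFT-plan} (consistently with $\mu^\natural$ being $K$-invariant).

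The main obstacle is not really an obstacle: once the covariance identity for $A(x,b)$ is in hand, \eqref{E:avg-sphFT-plan} is a routine Fubini calculation, and the rest is a matter of assembling Theorem \ref{T:mainthm} with the CGK2017 surjectivity theorem. The only point requiring mild care is the measure-theoretic bookkeeping for the passage from an integral over $K$ to one over $B=K/M$.
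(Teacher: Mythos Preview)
Your proposal is correct and follows essentially the same route as the paper: reduce to the $K$-invariant case via $c_\mu=c_{\mu^\natural}$, invoke Theorem \ref{T:mainthm} together with \cite{CGK2017}, Theorem 5.1, and identify $\widetilde{\mu^\natural}(\lambda)$ with $\int_B\widetilde\mu(\lambda,b)\,db$. The only cosmetic difference is that the paper verifies the last identity by using the $K$-invariance of the spherical function $\varphi_{-\lambda}$ (so that $\mu^\natural(\varphi_{-\lambda})=\mu(\varphi_{-\lambda})$ immediately) rather than the covariance $A(k\cdot x,b)=A(x,k^{-1}\cdot b)$; both computations are routine and amount to the same Fubini argument.
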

\begin{proof}
Since $c_\mu=c_{\mu^\natural}$, $c_\mu$ is surjective if and only if $c_{\mu^\natural}$ is surjective. But by Theorem \ref{T:mainthm} and Theorem 5.1 in \cite{CGK2017}, $c_{\mu^\natural}$ is surjective if and only if the spherical Fourier transform $(\mu^\natural)^\sim$ is slowly decreasing. Now for any $\lambda\in\a^*_\cc$, the left $K$-invariance of $\varphi_{-\lambda}$ and the Fubini Theorem for distributions implies that
	\begin{align*}
	(\mu^\natural)^\sim(\lambda)&=\int_X \varphi_{-\lambda}(x)\,d\mu^\natural(x)\\
	&=\int_X \varphi_{-\lambda}(x)\,d\mu(x)\\
	&=\int_X\int_B e^{(-i\lambda+\rho)A(x,b)}\,db\,d\mu(x)\\
	&=\int_B\int_X e^{(-i\lambda+\rho)A(x,b)}\,d\mu(x)\,db\\
	&=\int_B \widetilde{\mu}(\lambda,b)\,db.
	\end{align*}
 This finishes the proof.
\end{proof}

\section{Fundamental Solutions of Convolution Operators}
Since the exponential map from $\mathfrak p$ onto $X$ is a diffeomorphism, the topology of $\mathcal D(X)$ is the same as the topology of $\mathcal D(\mathfrak{p})$. Now let $\mathcal D_K(X)$ denote the space of left $K$-invariant functions in $\mathcal D(X)$. Likewise we let $\mathcal D_W(\a)$ be the subspace of $\mathcal D(\a)$ consisting of its $W$-invariant elements. We endow these subspaces with the induced topologies.

By definition, a projection on a topological vector space is just a continuous linear operator $P$ on that space such that $P^2=P$. The averaging operator $f\mapsto f^\natural$ is a projection from $\mathcal D(X)$ onto $\mathcal D_K(X)$.
Likewise, averaging over $W$, we get a projection from $\mathcal D(\a)$ onto $\mathcal D_W(\a)$.

While a closed subspace of an LF space is not  necessarily an LF space, the following lemma shows that the fixed point subspace of a projection of an LF space is one as well.

\begin{lemma}\label{T:lfspace}
Let $\mathcal D$ be an LF space, and let the sequence of Fr\'echet spaces $\{\mathcal D_j\}$ be a sequence of definition of $\mathcal D$. Suppose that $P\colon\mathcal D\to\mathcal D$ is a continuous linear map such that $P^2=P$ and $P(\mathcal D_j)\subset\mathcal D_j$, for all $j$. Then the fixed point subspace $\mathcal D^P$ (with the induced topology) is an LF space, with sequence of definition $\{\mathcal D_j^P\}$.
\end{lemma}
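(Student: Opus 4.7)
The plan is to verify the three ingredients in the definition of an LF space for $\mathcal D^P$ with the sequence $\{\mathcal D_j^P\}$: (i) each $\mathcal D_j^P$ is a Fréchet space, (ii) $\mathcal D^P = \bigcup_j \mathcal D_j^P$ with $\mathcal D_j^P$ closed in $\mathcal D_{j+1}^P$ and carrying the induced Fréchet topology, and (iii) the subspace topology on $\mathcal D^P$ coincides with the inductive limit topology of the $\mathcal D_j^P$.

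First, since $P$ is continuous and idempotent, $\mathcal D^P$ is the kernel of the continuous linear map $I-P$, hence closed in $\mathcal D$. For each $j$, the restriction $P|_{\mathcal D_j}$ is continuous from $\mathcal D_j$ to itself (using that $\mathcal D_j$ inherits its Fréchet topology as a subspace of $\mathcal D$ and that $P(\mathcal D_j)\subset\mathcal D_j$), so $\mathcal D_j^P = \ker(I-P)|_{\mathcal D_j}$ is a closed subspace of the Fréchet space $\mathcal D_j$, and is therefore Fréchet. The inclusion $\mathcal D_j^P \subset \mathcal D_{j+1}^P$ is closed and the topology of $\mathcal D_j^P$ is that induced from $\mathcal D_{j+1}^P$, directly from the analogous facts for $\mathcal D_j \subset \mathcal D_{j+1}$. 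To see $\mathcal D^P = \bigcup_j \mathcal D_j^P$, take any $f\in\mathcal D^P$; there exists $j$ with $f\in \mathcal D_j$, and then $f = Pf \in P(\mathcal D_j)\subset \mathcal D_j^P$.

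The heart of the argument is step (iii). Let $\tau_1$ denote the subspace topology on $\mathcal D^P$ inherited from $\mathcal D$ and $\tau_2$ the inductive limit topology associated to $\{\mathcal D_j^P\}$. Since each inclusion $\mathcal D_j^P\hookrightarrow (\mathcal D^P,\tau_1)$ is continuous (it is the composition $\mathcal D_j^P\hookrightarrow \mathcal D_j\hookrightarrow \mathcal D$ followed by corestriction to $\mathcal D^P$), the universal property of the inductive limit gives $\tau_1 \subseteq \tau_2$. For the reverse inclusion, let $V$ be a convex balanced absorbing $\tau_2$-neighborhood of $0$, so that $V\cap \mathcal D_j^P$ is open in $\mathcal D_j^P$ for every $j$. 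Define
\[
U = P^{-1}(V) = \{\, f\in\mathcal D : Pf\in V\,\}.
\]
Then $U$ is convex, balanced, and absorbing in $\mathcal D$. For each $j$, since $P(\mathcal D_j)\subset \mathcal D_j^P$, we have
\[
U\cap \mathcal D_j = (P|_{\mathcal D_j})^{-1}(V\cap \mathcal D_j^P),
\]
and $P|_{\mathcal D_j}\colon \mathcal D_j\to \mathcal D_j^P$ is continuous, so $U\cap \mathcal D_j$ is open in $\mathcal D_j$. Hence $U$ is a neighborhood of $0$ in the LF topology on $\mathcal D$. Since $P$ restricted to $\mathcal D^P$ is the identity, $U\cap \mathcal D^P = V$, which exhibits $V$ as a $\tau_1$-neighborhood of $0$. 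Because such $V$ form a basis of $\tau_2$-neighborhoods at the origin, we conclude $\tau_2\subseteq \tau_1$, and therefore $\tau_1=\tau_2$.

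The main obstacle is the containment $\tau_2\subseteq \tau_1$: the inductive limit topology is a priori finer than the subspace topology, and the only mechanism to manufacture an open set in $\mathcal D$ from level-wise data is the projection $P$. The hypothesis $P(\mathcal D_j)\subset \mathcal D_j$ is exactly what makes the pullback $P^{-1}(V)$ intersect each $\mathcal D_j$ in an open set, so everything hinges on this compatibility of $P$ with the defining sequence. Once that is in place, the rest is formal.
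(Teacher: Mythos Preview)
Your proof is correct and follows essentially the same approach as the paper: both directions of the topology comparison are handled identically, with the key step being the observation that $P^{-1}(V)$ is open in $\mathcal D$ because $P(\mathcal D_j)\subset\mathcal D_j$ makes $U\cap\mathcal D_j=(P|_{\mathcal D_j})^{-1}(V\cap\mathcal D_j^P)$ open in each $\mathcal D_j$. Your write-up is slightly more explicit about the preliminary verifications (that each $\mathcal D_j^P$ is Fr\'echet, etc.), but the substance is the same.
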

\begin{proof}
Note that  $\mathcal D_j^P=P(\mathcal D_j)$ for all $j$, and that $P(\mathcal D)=\mathcal D^P=\bigcup_j \mathcal D_j^P$.

The inclusion maps $\mathcal D_j^P\hookrightarrow \mathcal D$ are all continuous, so the inductive limit topology on $\mathcal D^P$ is finer than its subspace topology induced from $\mathcal D$.

On the other hand, suppose that $\mathcal U$ is a convex open neighborhood of $0$ in the inductive limit topology of $\mathcal D^P$. We claim that $\mathcal V=P^{-1}(\mathcal U)$ is a neighborhood of $0$ in $\mathcal D$. Since $\mathcal V\cap \mathcal D^P=\mathcal U$, this will prove that the subspace topology on $\mathcal D^P$ is finer than the inductive limit topology.

Now $\mathcal V$ is certainly convex, and since $P(\mathcal D_j)\subset \mathcal D_j$ for each $j$, we have
\begin{align*}
\mathcal V\cap \mathcal D_j&=P^{-1}(\mathcal U)\cap \mathcal D_j\\
&=(P|_{\mathcal D_j})^{-1}(\mathcal U\cap\mathcal D_j^P)
\end{align*}
But $\mathcal U\cap \mathcal D_j^P$ is open (by definition) in $\mathcal D_j^P$ and $P|_{\mathcal D_j}\colon \mathcal D_j\to\mathcal D_j^P$ is continuous. Hence $\mathcal V\cap\mathcal D_j$ is open in $\mathcal D_j$ for each $j$, so $\mathcal V$ is open in $\mathcal D$. This completes the proof.
\end{proof}

\begin{theorem}\label{T:distribution-surjectivity}
Suppose that $\mu\in\mathcal E_K'(X)$. Then the right convolution operator $c_\mu\colon\mathcal D'_K(X)\to\mathcal D'_K(X)$ is surjective if and only if the spherical Fourier transform $\widetilde\mu(\lambda)$ is slowly decreasing in $\a^*_\cc$.
\end{theorem}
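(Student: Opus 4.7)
The plan is to transfer the problem to the Euclidean setting on $\a$ via the Abel transform and then invoke Ehrenpreis's theorem. The surprising feature of the distribution case is that the ``hard'' Weyl-invariant surjectivity analysis of Theorem \ref{T:W-surjectivity} is \emph{not} needed: in $\mathcal D'$ one gets a fundamental solution essentially for free.

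The first step is to promote the Abel transform from $\mathcal E'_K(X)\to\mathcal E'_W(\a)$ to a topological isomorphism $\mathcal A\colon\mathcal D'_K(X)\to\mathcal D'_W(\a)$. Helgason's Paley-Wiener theorem for $K$-invariant compactly supported smooth functions gives a linear bijection $\mathcal A\colon\mathcal D_K(X)\to\mathcal D_W(\a)$; by Lemma \ref{T:lfspace} both spaces are LF spaces, so the open mapping theorem promotes this to a topological isomorphism. Dualizing, and making the standard identifications $\mathcal D_K(X)^*\cong\mathcal D'_K(X)$ and $\mathcal D_W(\a)^*\cong\mathcal D'_W(\a)$ coming from the $K$- and $W$-averaging projections, we get the desired isomorphism on distributions, still denoted $\mathcal A$. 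For $\mu\in\mathcal E'_K(X)$ and $S\in\mathcal D'_K(X)$, the intertwining identity $\mathcal A(S*\mu)=\mathcal AS*\mathcal A\mu$ follows by continuity from the compactly supported case handled in Proposition \ref{T:commutative1} (any reflection $\lambda\mapsto-\lambda$ incurred is harmless since slow decrease is invariant under it). Consequently, $c_\mu$ is surjective on $\mathcal D'_K(X)$ if and only if $c_{\mathcal A\mu}$ is surjective on $\mathcal D'_W(\a)$, and the problem reduces to showing, for $\Lambda\in\mathcal E'_W(\a)$, that $c_\Lambda$ is surjective on $\mathcal D'_W(\a)$ iff $\Lambda^*$ is slowly decreasing.

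For the forward direction, suppose $c_\Lambda$ surjects onto $\mathcal D'_W(\a)$. Since $\delta_0$ is $W$-invariant, $\delta_0\in\mathcal D'_W(\a)$, so there is $S\in\mathcal D'_W(\a)\subset\mathcal D'(\a)$ with $S*\Lambda=\delta_0$. By Theorem \ref{T:Ehrenpreis60}, (d)$\Rightarrow$(a), $\Lambda^*$ is slowly decreasing. For the reverse direction, assume $\Lambda^*$ is slowly decreasing. Then by Theorem \ref{T:Ehrenpreis60}, (a)$\Rightarrow$(c), $c_\Lambda$ is surjective on $\mathcal D'(\a)$. Given $S\in\mathcal D'_W(\a)$, pick $T\in\mathcal D'(\a)$ with $T*\Lambda=S$; the $W$-average $T^\natural=\frac{1}{|W|}\sum_{w\in W}w\cdot T$ lies in $\mathcal D'_W(\a)$, and since $\Lambda$ and $S$ are $W$-invariant we get $T^\natural*\Lambda=S$. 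Combining with Step 1 and using $(\mathcal A\mu)^*=\widetilde\mu$ (equation \eqref{E:abelslice}) completes the proof.

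The main obstacle is the bookkeeping in the first step: carefully setting up $\mathcal A$ on $\mathcal D'_K(X)$ so that it agrees with the already-constructed Abel transform on the subspace $\mathcal E'_K(X)$ and genuinely intertwines convolution by compactly supported $K$-invariant distributions. The identification $\mathcal D_K(X)^*\cong\mathcal D'_K(X)$, while intuitive via the $K$-averaging projection, has to be made precise for the strong dual topology, and the extension of Proposition \ref{T:commutative1} from $\mathcal E'_K$ to $\mathcal D'_K$ requires a short continuity/density argument. Once this machinery is in place, the heavy lifting is entirely supplied by Ehrenpreis's theorem, and the proof is essentially one line in each direction.
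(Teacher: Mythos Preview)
Your proposal is correct, and your approach to the implication ``$c_\mu$ surjective on $\mathcal D'_K(X)\Rightarrow\widetilde\mu$ slowly decreasing'' is genuinely simpler than the paper's. The paper, after transferring to $\a$ via the adjoint diagram, invokes Lemma \ref{T:W-inv-surj} (the Sobolev/singular-support argument) to conclude that $\mathcal A\mu$ is invertible from surjectivity of $c_{\mathcal A\mu}$ on $\mathcal D'_W(\a)$. You bypass this entirely by observing that $\delta_0\in\mathcal D'_W(\a)$, so surjectivity on $W$-invariant distributions immediately furnishes a fundamental solution $S*\Lambda=\delta_0$, and then Ehrenpreis's (d)$\Rightarrow$(a) finishes. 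This is the right observation, and it explains your remark that the hard $W$-invariant analysis of Theorem \ref{T:W-surjectivity} is not needed here: the difference between the $\mathcal E$ and $\mathcal D'$ settings is precisely that $\delta_0$ lives in the latter but not the former. For the other direction (slowly decreasing $\Rightarrow$ surjective), your argument and the paper's coincide: Ehrenpreis on all of $\mathcal D'(\a)$ followed by $W$-averaging.

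One point of bookkeeping to tighten: the paper does not extend $\mathcal A$ to $\mathcal D'_K(X)$ by density but instead takes the adjoint of the diagram $\mathcal D_K(X)\xrightarrow{c_\mu}\mathcal D_K(X)$ over $\mathcal D_W(\a)\xrightarrow{c_{\mathcal A\mu}}\mathcal D_W(\a)$, obtaining a diagram on $\mathcal D'$ with $c_{\widecheck\mu}$ and $c_{(\mathcal A\mu)^\vee}$, and then swaps $\mu\leftrightarrow\widecheck\mu$ at the end. This avoids having to check that your dualized map agrees with the original Abel transform on $\mathcal E'_K(X)$ and sidesteps the continuity/density argument you flag as the main obstacle. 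Either route works, but the adjoint-diagram route is cleaner and is worth adopting.
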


\begin{proof} The Paley-Wiener Theorem for the spherical Fourier transform (see \cite{GGA}, Chapter IV) states that the image of $\mathcal D_K(X)$ under the spherical Fourier transform is the space of $W$-invariant holomorphic functions of exponential type in $\a_\cc^*$ which are rapidly decreasing in $\a$. Since this is also precisely the image of $\mathcal D_W(\a)$ under the (Euclidean) Fourier-Laplace transform on $\a$, the projection-slice theorem \eqref{E:abelslice} shows that the Abel transform $\mathcal A\colon\mathcal D_K(X)\to\mathcal D_W(\a)$ is a linear bijection.

By Lemma \ref{T:lfspace}, both $\mathcal D_K(X)$ and $\mathcal D_W(\a)$ are LF spaces. 
Now the Abel transform $\mathcal A\colon \mathcal D_K(X)\to\mathcal D_W(\a)$ is continuous by Lemma 3.5, Ch. I in \cite{GGA}. Hence the Open Mapping Theorem, which holds for LF spaces (\cite{DieudonneSchwartz1949}, Th\'eor\`eme I) implies that $\mathcal A$ is a homeomorphism.

Now the adjoint of the commutative diagram
\begin{equation}\label{E:commdiag2}
\begin{tikzcd}
\mathcal D_K(X) \arrow{r}{c_\mu} \arrow[swap]{d}{\mathcal A} & \mathcal D_K(X) \arrow{d}{\mathcal A} \\
\mathcal D_W(\a) \arrow{r}{c_{\mathcal A\mu}}& \mathcal D_W(\a)
\end{tikzcd} 
\end{equation}
 is the commutative diagram
\begin{equation}\label{E:commdiag3}
\begin{tikzcd}
\mathcal D'_K(X)& \arrow[l,"c_{\widecheck\mu}", swap]\mathcal D'_K(X)\\
\arrow[u,"\mathcal T"]\mathcal D'_W(\a)&\arrow[l,"c_{(\mathcal A\mu)^\vee}",swap]\mathcal D'_W(\a)\arrow[u,"\mathcal T", swap] 
\end{tikzcd}
\end{equation}
where $\mathcal T$ is the adjoint of $\mathcal A$. Since $\mathcal A$ is a homeomorphism, $\mathcal T$ is a bijection.

Suppose that $\widetilde\mu(\lambda)$ is slowly decreasing. Then $(\widecheck\mu)^\sim(\lambda)=\widetilde\mu(-\lambda)$ is slowly decreasing. Now this equals $((\mathcal A\mu)^\vee)^*(\lambda)$, so by \cite{Ehrenpreis1960}, Theorem 2.2, the convolution operator
$$
c_{(\mathcal A\mu)^\vee}\colon\mathcal D'(\a)\to\mathcal D'(\a)
$$
is surjective. Since $\mathcal A\mu$ is $W$-invariant, we can take averages over $W$ to conclude that
$$
c_{(\mathcal A\mu)^\vee}\colon\mathcal D'_W(\a)\to\mathcal D'_W(\a)
$$
is surjective. The diagram \eqref{E:commdiag3} then shows that $c_{\widecheck\mu}\colon\mathcal D'_K(X)\to\mathcal D'_K(X)$ is surjective. Interchanging $\mu$ and $\widecheck\mu$, we conclude that $c_\mu$ is also surjective.

Conversely, suppose that $c_\mu\colon\mathcal D'_K(X)\to\mathcal D'_K(X)$ is surjective. Then the diagram \eqref{E:commdiag3} (with $\mu$ replaced by $\widecheck\mu$) shows that $c_{\mathcal A\mu}\colon\mathcal D'_W(\a)\to\mathcal D'_W(\a)$ is surjective. Lemma \ref{T:W-inv-surj} then implies that $(\mathcal A\mu)^*(\lambda)=\widetilde\mu(\lambda)$ is slowly decreasing.

\end{proof}

In the proof above, we used the fact that the Abel transform $\mathcal A\colon\mathcal D_K(X)\to\mathcal D_W(\a)$ is a homeomorphism. Since $\mathcal D_K(X)\approx\mathcal D_K(\p)$, a simpler homeomorphism is given by the restriction map $f\mapsto f|_{\a}$ from $\mathcal D_K(\p)$ onto $\mathcal D_W(\a)$.

Let $\mathbb D(X)$ denote the algebra of left $G$-invariant differential operators on $X$, and let $\Gamma\colon\mathbb D(X)\to S_W(\a)$ be the Harish-Chandra isomorphism, where $S_W(\a)$ is the algebra of $W$-invariant elements of the symmetric algebra $S(\a)$ (or the algebra of $W$-invariant polynomial functions on $\a^*_\cc$). Each $D\in\mathbb D(X)$ can be thought of as a convolution operator $c_\mu$, where $\mu=D\,\delta_o$. Then for this distribution $\mu$, we have $\widetilde\mu(\lambda)=\Gamma(D)(i\lambda)$, for all $\lambda\in\a^*_\cc$.

\begin{corollary}\label{T:fundamentalsoln2}
Let $D$ be a nonzero element of $\mathbb D(X)$. Then $D$ has a fundamental solution.
\end{corollary}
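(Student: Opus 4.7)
The plan is to apply Theorem~\ref{T:distribution-surjectivity} to the compactly supported distribution $\mu:=D\delta_o$. It lies in $\mathcal E'_K(X)$ because $D$ is left $G$-invariant (hence commutes with left $K$-translations) and $\delta_o$ is $K$-invariant. As noted immediately before the statement of the corollary, the spherical Fourier transform of $\mu$ is $\widetilde\mu(\lambda)=\Gamma(D)(i\lambda)$, which, since $D\ne 0$ and $\Gamma\colon\mathbb D(X)\to S_W(\a)$ is an isomorphism, is a nonzero polynomial function on $\a^*_\cc$.

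The only nontrivial point is to verify that every nonzero polynomial $P$ on $\a^*_\cc\cong\cc^r$ is slowly decreasing in the sense of \eqref{E:slow-decrease-cond}. This is classical: let $P_d$ denote the top-degree homogeneous component of $P$, pick a direction $v\in\cc^r$ with $P_d(v)\ne 0$, and restrict $P$ to the complex line $\{\xi+\zeta v:\zeta\in\cc\}$. The resulting one-variable polynomial in $\zeta$ has constant (nonzero) leading coefficient $P_d(v)$, so a standard minimum-modulus estimate on the disk $|\zeta|\le 1$ produces a point $\zeta_\xi$ with $\|\zeta_\xi-\xi\|\le\|v\|$ and $|P(\zeta_\xi)|\ge c_0|P_d(v)|$, with $c_0$ depending only on $\deg P$. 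Taking $A$ sufficiently large so that $\|v\|\le A\log 2$ and $A^{-A}\le c_0|P_d(v)|$ yields the slow-decrease condition. (Equivalently, this fact is just the Malgrange--Ehrenpreis theorem read off from Theorem~\ref{T:Ehrenpreis60}(d) applied to the constant-coefficient differential operator on $\a$ whose Fourier--Laplace symbol is $P$.)

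Consequently $\widetilde\mu$ is slowly decreasing, and Theorem~\ref{T:distribution-surjectivity} produces $S\in\mathcal D'_K(X)$ with $c_\mu(S)=\delta_o$. Since $D$ acts on $\mathcal D'(X)$ as $c_{D\delta_o}=c_\mu$ (cf.\ the paragraph preceding the statement of the corollary), this reads $DS=\delta_o$, so $S$ is the desired $K$-invariant fundamental solution. The main conceptual obstacle is the polynomial slow-decrease step above; once it is granted, the result is an immediate application of Theorem~\ref{T:distribution-surjectivity}.
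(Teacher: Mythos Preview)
Your argument is correct and follows exactly the paper's approach: identify $D$ with $c_\mu$ for $\mu=D\delta_o$, observe that $\widetilde\mu(\lambda)=\Gamma(D)(i\lambda)$ is a nonzero polynomial and hence slowly decreasing, and apply Theorem~\ref{T:distribution-surjectivity}. The paper's proof is a single sentence that asserts the slow-decrease of polynomials without justification; you supply a sketch of that step (the restriction-to-a-line minimum-modulus argument), which is standard and correct.
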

\begin{proof}
The function $\lambda\mapsto \Gamma(D)(i\lambda)$ is a polynomial function on $\a^*_\cc$, hence slowly decreasing.
\end{proof}
The existence of fundamental solutions of invariant differential operators on symmetric spaces was proved by Helgason in 1964 (\cite{Helgason1963}, \cite{Helgason1964}).

It would be interesting (and natural) to try to extend Theorem \ref{T:distribution-surjectivity} to the surjectivity of $c_\mu$ on all of $\mathcal D'(X)$. There is no proof that we know of at present. In the case when $c_\mu$ is a left-invariant differential operator $D$, Eguchi (\cite{Eguchi1979}, Theorem 10) claims that $D(\mathcal D'(X))=\mathcal D'(X)$, but as Professor Helgason pointed out to us, his proof rests on the unproven claim that the subspace $D^*(\mathcal D(X))$ of the $LF$ space $\mathcal D(X)$ is likewise an $LF$ space.

For a fixed $\mu\in\mathcal E'_K(X)$, let us say that a distribution $S$ on $X$ is a \emph{fundamental solution of $c_\mu$} if $S*\mu=\delta_o$. Theorem \ref{T:distribution-surjectivity} says that if $\widetilde\mu(\lambda)$ is slowly decreasing, then $c_\mu$ has a fundamental solution. The converse is actually also true. To prove it, we need a few preliminary results.

\begin{theorem}\label{T:thmofsupports} (The Theorem of Supports.)
Let $S,\,T\in\mathcal E'(\rr^n)$. If $\text{supp}$ denotes support and $\text{ch}$ denotes the convex hull in $\rr^n$, then
$$
\text{ch}(\text{supp}\,(S*T))=\text{ch}(\text{supp}\,S)+\text{ch}(\text{supp}\,T).
$$
\end{theorem}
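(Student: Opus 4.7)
The plan has two parts. The inclusion $\text{ch}(\text{supp}(S*T)) \subseteq \text{ch}(\text{supp}\,S) + \text{ch}(\text{supp}\,T)$ is the easy direction. It follows from the set-theoretic inclusion $\text{supp}(S*T) \subseteq \text{supp}\,S + \text{supp}\,T$ (immediate from the representation $(S*T)(\phi)=\iint\phi(x+y)\,dS(x)\,dT(y)$), combined with the elementary identity $\text{ch}(A+B) = \text{ch}(A) + \text{ch}(B)$ for compact $A, B \subset \rr^n$, which one verifies by expanding a double convex combination as $\sum_i\lambda_i a_i + \sum_j \mu_j b_j = \sum_{i,j}\lambda_i\mu_j(a_i+b_j)$.

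For the reverse inclusion I would pass to support functions. Set $K_S = \text{ch}(\text{supp}\,S)$, $K_T = \text{ch}(\text{supp}\,T)$, $K = \text{ch}(\text{supp}(S*T))$, and $H_M(\xi)=\sup_{x \in M}\langle x, \xi\rangle$. Since a compact convex set is the intersection of its supporting half-spaces (Hahn-Banach), the inclusion $K \supseteq K_S + K_T$ is equivalent to $H_K(\xi) \geq H_{K_S}(\xi) + H_{K_T}(\xi)$ for every $\xi \in \rr^n$. The bridge to Paley-Wiener is the classical identification of the support function with the Lindel\"of indicator of the Fourier-Laplace transform: for $u \in \mathcal E'(\rr^n)$,
\begin{equation*}
H_{\text{ch}(\text{supp}\,u)}(\xi) \;=\; \limsup_{t \to +\infty}\; \frac{\log|u^*(-it\xi)|}{t}.
\end{equation*}
The $\leq$ half of this identity is immediate from the Paley-Wiener estimate \eqref{E:PW-estimate}, and the $\geq$ half is a minimum-modulus statement in the spirit of \cite{Ehrenpreis1955}, Theorem 5, already invoked in Section 1.

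Applied to $u = S*T$ with $(S*T)^* = S^*\cdot T^*$, the problem reduces to the additivity of indicators for the product. Subadditivity $h_{S^*T^*} \leq h_{S^*}+h_{T^*}$ is trivial and in fact recovers the easy inclusion; the reverse inequality is the heart of the theorem. One must find complex points $\zeta_t$ clustering near the ray $-it\xi$ on which $|S^*(\zeta_t)|$ and $|T^*(\zeta_t)|$ \emph{simultaneously} attain their respective indicator lower bounds. The main obstacle is that a priori the zero sets of $S^*$ and $T^*$ could conspire along the ray to suppress the product. The resolution is to apply the minimum-modulus theorem separately to each factor, producing positive-density collections of ``good'' complex points in discs of logarithmically growing radius around every $-it\xi$; these two collections then overlap on the logarithmic scale, yielding the simultaneous attainment. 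This is precisely the complex-analytic machinery that underlies Section 1 of the paper, and for a fully packaged proof one can consult the distribution-theory volume of H\"ormander. An alternative route, worth noting, is to first reduce to the one-dimensional case by pushing $S$ and $T$ forward under the linear functional $x\mapsto\langle x,\xi\rangle$ and then invoke Titchmarsh's classical convolution theorem on $\rr$.
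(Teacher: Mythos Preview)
The paper does not prove this theorem: it attributes the result to J.-L.~Lions and simply refers the reader to \cite{Ho1}, Theorem~4.3.3. So there is no in-paper argument to compare against; your proposal supplies considerably more than the paper does.

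Your outline is sound. The easy inclusion is handled correctly, and for the hard inclusion you correctly reduce to the additivity of support functions and identify $H_{\text{ch}(\text{supp}\,u)}$ with the radial indicator of $u^*$ via Paley--Wiener--Schwartz. Two remarks are worth making. First, the route you label as ``alternative''---pushing $S$ and $T$ forward under $x\mapsto\langle x,\xi\rangle$ to reduce to the one-dimensional Titchmarsh convolution theorem---is in fact the route H\"ormander takes in the cited reference, so it is not a detour but the standard argument. Second, your primary route via simultaneous lower bounds for $|S^*|$ and $|T^*|$ is also viable, but the sentence ``these two collections then overlap on the logarithmic scale'' needs to be unpacked: the overlap follows because the exceptional sets furnished by the minimum-modulus theorem (Cartan's lemma) have small total diameter relative to the disc, so the union of the two exceptional sets still leaves a nonempty complement. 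Stated that way the step is legitimate, but as written it reads as an assertion rather than an argument. Either route, completed, yields the theorem; since the paper already defers to H\"ormander, doing the same is entirely consistent with its treatment.
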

This theorem, due to J-L. Lions, is a standard result in distribution theory. See, for example, \cite{Ho1}, Theorem 4.3.3.

\begin{lemma}\label{T:support-result} Fix $\mu\in\mathcal E'_K(X)$ and let $E\subset \mathcal E'_K(X)$. If there is a ball $B_R(o)$ of $X$ containing the supports of all the distributions in $c_\mu(E)$, then there is a ball $B_{R'}(o)$ containing the supports of all the distributions in $E$. 
\end{lemma}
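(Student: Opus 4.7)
The plan is to transfer the problem from $X$ to $\a$ via the Abel transform and then invoke the Euclidean Theorem of Supports (Theorem \ref{T:thmofsupports}) on $\a\simeq\rr^n$. Since $(G,K)$ is a Gelfand pair, the spherical Fourier transform converts convolution of left $K$-invariant distributions to multiplication; combined with the projection--slice identity \eqref{E:abelslice}, the same Euclidean--Fourier argument used at the end of the proof of Proposition \ref{T:commutative1} gives
$$
\mathcal A(S*\mu)=\mathcal AS*\mathcal A\mu\qquad\text{for all }S\in\mathcal E'_K(X),
$$
since both sides have Euclidean Fourier transform $\widetilde S(\lambda)\,\widetilde\mu(\lambda)$ and the Abel transform is injective on $\mathcal E'_K(X)$.

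The next step is a support--preservation statement for the Abel transform: for every $\nu\in\mathcal E'_K(X)$,
$$
\text{supp}(\nu)\subset \overline B_R(o)\subset X\iff\text{supp}(\mathcal A\nu)\subset\{H\in\a:\|H\|\leq R\}.
$$
The forward implication was already observed in the paper just after \eqref{E:radontransform1}, since $\|A(x,b)\|\leq d(o,x)$. The reverse implication follows from combining the spherical Paley--Wiener theorem on $X$ with the Euclidean Paley--Wiener theorem on $\a$: both sides characterize exponential type $\leq R$ of the common holomorphic function $\widetilde\nu(\lambda)=(\mathcal A\nu)^*(\lambda)$ on $\a^*_\cc$. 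Applying the forward direction to each $S*\mu$, the hypothesis of the lemma becomes: $\text{supp}(\mathcal AS*\mathcal A\mu)\subset\overline B_R(0)\subset\a$, uniformly in $S\in E$.

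The problem now lives entirely on $\a$. Since the conclusion is vacuous if $\mu=0$, we may assume $\mu\neq 0$, so that $K_0:=\text{ch}(\text{supp}\,\mathcal A\mu)$ is a nonempty compact convex set in $\a$; pick any $h_0\in K_0$. Theorem \ref{T:thmofsupports} applied on $\a$ gives
$$
\text{ch}(\text{supp}(\mathcal AS))+K_0=\text{ch}(\text{supp}(\mathcal AS*\mathcal A\mu))\subset \overline B_R(0),
$$
and translating by $-h_0$ yields $\text{supp}(\mathcal AS)\subset \overline B_{R+\|h_0\|}(0)$ for every $S\in E$. Setting $R'=R+\|h_0\|$ and applying the reverse direction of the support correspondence then concludes $\text{supp}(S)\subset \overline B_{R'}(o)\subset X$ for every $S\in E$.

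The only ``nontrivial'' ingredient is the reverse direction of the support--preservation step, i.e.\ the support--shrinking half of the spherical Paley--Wiener theorem; once that is granted, the rest is a clean reduction to Lions' classical Theorem of Supports. The explicit estimate $R'=R+\|h_0\|$ makes visible how the support radius inflates by (essentially) the diameter of $\text{supp}\,\mathcal A\mu$, which is the Euclidean analogue of ``dividing out'' the fixed convolver $\mu$.
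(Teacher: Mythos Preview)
Your proof is correct and follows essentially the same route as the paper's: transfer to $\a$ via the Abel transform using $\mathcal A(S*\mu)=\mathcal AS*\mathcal A\mu$, apply Lions' Theorem of Supports there, and then return to $X$ via the spherical Paley--Wiener theorem. The paper invokes the commutative diagram \eqref{E:commdiag4} instead of re-deriving the convolution identity, and it does not isolate the two-sided support correspondence as a separate statement or make $R'$ explicit; your version packages these steps a bit more cleanly and handles the trivial case $\mu=0$, but the underlying argument is the same.
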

\begin{proof}
If we replace $\mu$ by $\widecheck\mu$ in \eqref{E:commdiagE'}, we obtain the commutative diagram 
\begin{equation}\label{E:commdiag4}
\begin{tikzcd}
\mathcal E'_K(X) \arrow{r}{c_\mu} \arrow[swap]{d}{\mathcal A} & \mathcal E'_K(X) \arrow{d}{\mathcal A} \\
\mathcal E'_W(\a) \arrow{r}{c_{\mathcal A\mu}}& \mathcal E'_W(\a),
\end{tikzcd}
\end{equation}
where the Abel transform $\mathcal A$ is a linear bijection. The hypothesis thus implies that the distributions in the image $\mathcal A(c_\mu(E))=c_{\mathcal A\mu}(\mathcal A(E))$ are all supported in the ball $B_R(0)\subset \a$. By the Theorem of Supports, the distributions in $\mathcal A(E)$ are supported in some ball $B_{R'}(0)$. By the forward Paley-Wiener Theorem and the relation \eqref{E:abelslice}, we see that the spherical Fourier transforms $\widetilde S(\lambda)$, for $S\in E$, are all of exponential type $R'$. The Paley-Wiener Theorem for the spherical Fourier transform on $K$-invariant distributions then implies that the distributions in $E$ are all supported in $B_{R'}(o)$.
\end{proof}


If we endow $\mathcal D'(X)$ with either the weak* or strong topology, then the corresponding subspace topology of $\mathcal D'_K(X)=(\mathcal D'(X))_K$ coincides with its weak* or strong topology as the dual space of $\mathcal D_K(X)$. The same goes for $\mathcal E_K'(X),\;\mathcal E'_W(\a)$, etc. Recall that the bounded sets are the same for the strong and weak* topologies in the dual spaces.

\begin{theorem}\label{T:PW-boundedness}
Let $\mathcal B\subset \mathcal E'(\rr^n)$. Then $\mathcal B$ is bounded in $\mathcal{E'}(\rr^n)$ if and only if there are positive constants $A$ and $C$ and a nonnegative integer $N$ such that
\begin{enumerate}
\item[(i)] The Fourier transform $S^*(\zeta)$ is of exponential type $A$, for every $S\in\mathcal B$, and
\item[(ii)] for all $S\in\mathcal B$ and all $\xi\in\rr^n$, we have
$$
|S^*(\xi)|\leq C\,(1+\|\xi\|)^N.
$$
\end{enumerate}
\end{theorem}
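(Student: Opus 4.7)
The plan is to prove the two implications via standard Paley--Wiener theory combined with the Banach--Steinhaus (uniform boundedness) principle.

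For the forward direction, suppose $\mathcal B$ is bounded in $\mathcal E'(\rr^n)$. Since $\mathcal E(\rr^n)$ is a Fr\'echet space and thus barreled, $\mathcal B$ is equicontinuous. By the definition of the topology on $\mathcal E(\rr^n)$, there exist a compact set $K\subset\rr^n$, a nonnegative integer $N$, and a constant $C$ such that
$$|S(f)|\le C\sup_{|\alpha|\le N,\,x\in K}|\partial^\alpha f(x)|$$
for all $S\in\mathcal B$ and all $f\in\mathcal E(\rr^n)$. Pick $R>0$ with $K\subset\overline B_R$ and a cutoff $\chi\in\mathcal D(\rr^n)$ equal to $1$ on an open neighborhood of $K$. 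The estimate above forces each $S\in\mathcal B$ to be supported in $K$, so $S^*(\zeta)=S(\chi\,e^{-i\langle\cdot,\zeta\rangle})$. Differentiating $\chi\,e^{-i\langle\cdot,\zeta\rangle}$ and evaluating on $K$ (where $\chi\equiv 1$, so the derivatives of $\chi$ drop out) yields
$$|S^*(\zeta)|\le C'(1+\|\zeta\|)^N\,e^{R\|\text{Im}\,\zeta\|},\qquad \zeta\in\cc^n,$$
with $C',N$ independent of $S\in\mathcal B$. This immediately gives both (i) (with $A=R$) and (ii).

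For the converse, assume (i) and (ii) hold with constants $A,C$ and index $N$. By the Phragm\'en--Lindel\"of principle, applied coordinate-wise as in the proof of the Paley--Wiener--Schwartz theorem, (i) and (ii) together strengthen to the uniform estimate
$$|S^*(\zeta)|\le C(1+\|\zeta\|)^N\,e^{A\|\text{Im}\,\zeta\|},\qquad S\in\mathcal B,\ \zeta\in\cc^n.$$
In particular, the Paley--Wiener theorem quoted in the introduction shows that each $S\in\mathcal B$ is supported in $\overline B_A$. Since $\mathcal E(\rr^n)$ is reflexive, weak$^*$ boundedness in the dual coincides with boundedness, so it suffices to show that $\{S(f):S\in\mathcal B\}$ is bounded for each fixed $f\in\mathcal E(\rr^n)$. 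Fix such an $f$ and choose $\chi\in\mathcal D(\rr^n)$ equal to $1$ on a neighborhood of $\overline B_A$. Then $S(f)=S(\chi f)$, and since $\chi f\in\mathcal D(\rr^n)$, Fourier inversion gives
$$S(f)=(2\pi)^{-n}\int_{\rr^n}(\chi f)^*(\xi)\,S^*(-\xi)\,d\xi,$$
the interchange of $S$ with the integral being justified by the compact support of $S$. Since $(\chi f)^*\in\mathcal S(\rr^n)$ and $|S^*(-\xi)|\le C(1+\|\xi\|)^N$, the resulting bound on $|S(f)|$ is independent of $S\in\mathcal B$, so $\mathcal B$ is weak$^*$ bounded, hence bounded.

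The forward direction is essentially a uniform version of the usual Paley--Wiener estimate and is straightforward once equicontinuity is invoked. The main technical point is in the converse, namely justifying the interchange of $S$ with the Fourier inversion integral; this is handled by writing the integral as a limit of Riemann sums in $\mathcal E(\rr^n)$ and using the continuity of $S$ on that space, which is standard for compactly supported distributions. The other subtlety, upgrading pointwise exponential type plus real-axis polynomial bounds to a uniform Paley--Wiener estimate, is precisely where Phragm\'en--Lindel\"of is needed; without it, (i) alone would only give $S$-dependent constants in the complex estimate.
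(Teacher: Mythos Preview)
The paper does not supply its own proof of this theorem; it simply refers the reader to Ehrenpreis (\cite{EhrenpreisAnnals1956}, Theorem~7). Your argument is correct and follows the standard route: equicontinuity via Banach--Steinhaus for the forward direction, and Phragm\'en--Lindel\"of combined with Fourier inversion for the converse. One small cosmetic point: in the forward direction the cutoff $\chi$ is unnecessary, since $e^{-i\langle\cdot,\zeta\rangle}\in\mathcal E(\rr^n)$ and you can apply the equicontinuity estimate to it directly; but what you wrote is also fine.
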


See \cite{EhrenpreisAnnals1956}, Theorem 7.

\begin{theorem}\label{T:fundamentalsoln1}
Fix $\mu\in\mathcal E'_K(X)$. Then $c_\mu$ has a fundamental solution if and only if its spherical Fourier transform $\widetilde\mu(\lambda)$ is slowly decreasing.
\end{theorem}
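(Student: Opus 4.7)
The ``if'' direction is immediate from Theorem~\ref{T:distribution-surjectivity}: if $\widetilde\mu$ is slowly decreasing, then $c_\mu\colon\mathcal D'_K(X)\to\mathcal D'_K(X)$ is surjective, so the $K$-invariant distribution $\delta_o$ has a preimage $S\in\mathcal D'_K(X)\subset\mathcal D'(X)$, giving a fundamental solution.

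For the converse, my plan is to argue by contradiction, transferring the problem from $X$ to $\a$ via the Abel transform and invoking the singular-support criterion of Lemma~\ref{T:W-invertible-cond}. Given a fundamental solution $S\in\mathcal D'(X)$, I would first pass to its $K$-average $S^\natural\in\mathcal D'_K(X)$; since $K$-averaging commutes with right convolution by the $K$-invariant $\mu$ and $\delta_o$ is already $K$-invariant, $S^\natural*\mu=\delta_o$ as well. Now suppose for contradiction that $\widetilde\mu=(\mathcal A\mu)^*$ is not slowly decreasing on $\a^*_\cc$. Then $\mathcal A\mu\in\mathcal E'_W(\a)$ is not invertible in the Euclidean sense, so Lemma~\ref{T:W-invertible-cond} furnishes a distribution $\Sigma\in\mathcal E'_W(\a)$ with $\mathcal A\mu*\Sigma\in C^\infty(\a)$ but $\Sigma\notin C^\infty(\a)$.

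The next step is to pull $\Sigma$ back to $X$ through the Abel-transform bijection $\mathcal A\colon\mathcal E'_K(X)\to\mathcal E'_W(\a)$ from \eqref{E:abelcommutative}, setting $T=\mathcal A^{-1}(\Sigma)\in\mathcal E'_K(X)$. The intertwining relation \eqref{E:commdiag4} gives $\mathcal A(T*\mu)=\Sigma*\mathcal A\mu$, which is smooth and compactly supported on $\a$, hence lies in $\mathcal D_W(\a)$. Since the Abel transform restricts to a bijection $\mathcal D_K(X)\xrightarrow{\sim}\mathcal D_W(\a)$ on smooth compactly supported $K$-invariants (Helgason's Paley--Wiener theorem), this forces $T*\mu\in\mathcal D_K(X)$.

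Finally, because $(G,K)$ is a Gelfand pair, convolution of left $K$-invariant distributions is commutative, and among $T,\mu,S^\natural$ only $S^\natural$ lacks compact support, so the triple convolution $T*\mu*S^\natural$ is unambiguously defined as a single iterated distributional integral and is associative. Consequently
\[
T \;=\; T*\delta_o \;=\; T*(\mu*S^\natural) \;=\; (T*\mu)*S^\natural,
\]
and the right-hand side is the convolution of the test function $T*\mu\in\mathcal D_K(X)$ with the distribution $S^\natural$, which is therefore a smooth function on $X$. Thus $T\in C^\infty(X)\cap\mathcal E'_K(X)=\mathcal D_K(X)$, which forces $\Sigma=\mathcal A T\in\mathcal D_W(\a)\subset C^\infty(\a)$, contradicting the choice of $\Sigma$. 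The most delicate step is the rigorous handling of the mixed convolution $T*\mu*S^\natural$: since $S^\natural$ need not have compact support, one must appeal to the general fact that such a triple convolution of $K$-invariants is well-defined and associative whenever at most one factor has non-compact support, a point that can be verified by lifting to $G$ and using Fubini for distributions as in Section~2.
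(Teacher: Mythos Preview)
Your proof is correct, but it takes a genuinely different route from the paper's. The paper's converse argument first establishes that the existence of a fundamental solution $S^\natural$ forces the map $\Psi*\mu\mapsto\Psi$ on $\mathcal E'_K(X)$ to take bounded sets to bounded sets (using Lemma~\ref{T:support-result} and the identity $S^\natural*(E*\mu)=E$), transfers this to $\mathcal E'_W(\a)$ via the Abel homeomorphism, and then, assuming $\widetilde\mu$ is not slowly decreasing, follows Ehrenpreis to construct an explicit sequence $\{E_j\}\subset\mathcal E'_W(\a)$ (built from the auxiliary functions $h_j,H_j,F_j^\sigma$) that is unbounded while $\{E_j*\mathcal A\mu\}$ is bounded, invoking Theorem~\ref{T:PW-boundedness} to reach a contradiction.

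You bypass the entire Ehrenpreis construction by reusing the singular-support criterion Lemma~\ref{T:W-invertible-cond}: the non-invertibility of $\mathcal A\mu$ already manufactures a $W$-invariant $\Sigma\in\mathcal E'_W(\a)\setminus C^\infty(\a)$ with $\mathcal A\mu*\Sigma$ smooth, and pulling back to $T\in\mathcal E'_K(X)$ via the bijection $\mathcal A\colon\mathcal D_K(X)\to\mathcal D_W(\a)$ (used in the proof of Theorem~\ref{T:distribution-surjectivity}) converts this into $T*\mu\in\mathcal D_K(X)$; the fundamental solution then regularises $T$ through $T=(T*\mu)*S^\natural$. This is considerably shorter and makes clear that the hard analysis was already done in Section~4. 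The only cost is the associativity of the triple convolution $T*\mu*S^\natural$ with one non-compactly-supported factor, which, as you note, follows routinely by lifting to $G$; the paper's approach avoids this point but pays for it with a page of explicit estimates.
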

\begin{proof}
If $\widetilde\mu(\lambda)$ is slowly decreasing, then Theorem \ref{T:distribution-surjectivity} guarantees the existence of $S\in\mathcal D'_K(X)$ such that $S*\mu=\delta_o$.

The converse is the hard part. Suppose that there exists an $S\in\mathcal D'(X)$ such that $S*\mu=\delta_o$. Replacing $S$ by $S^\natural$, we may assume that $S\in\mathcal D'_K(X)$. We wish to prove that $\widetilde\mu$ is slowly decreasing. For this, we first make the following claim:

\textit{The map $\Psi*\mu\mapsto \Psi$ from $c_\mu(\mathcal E_K'(X))$ to  $\mathcal E_K'(X)$ takes bounded sets to bounded sets.}

We now prove the claim. Suppose that $B\subset \mathcal E_K'(X)$ such that $c_\mu(B)$ is bounded. Then there is a ball $B_R(o)$ containing the supports of all the elements of $c_\mu(B)$, so by Lemma \ref{T:support-result}, there is a ball $B_{R'}(o)$ containing the supports of all the elements of $B$.

Now since convolution on $\mathcal E_K'(X)$ is commutative, we have
$$
S*c_\mu(B)=S*\mu*B=\delta_o*B=B*\delta_o=B.
$$
The convolution operator $c_S\colon\mathcal E_K'(X)\to\mathcal D_K'(X)$ is continuous, so $B$ is a bounded subset of $\mathcal D_K'(X)$. Since the elements of $B$ are all supported in the same compact set $\overline B_{R'}(o)$, we conclude that in fact $B$ is bounded in $\mathcal E_K'(X)$, proving the claim.

We will now show that any distribution $\mu\in\mathcal E_K'(X)$ satisfying the claim is slowly decreasing. Since $\mathcal A\colon\mathcal E_K'(X)\allowbreak \to\mathcal E_W'(\a)$ is a homeomorphism, the diagram \eqref{E:commdiag4} shows that the map $T*\mathcal A\mu\to T$ from $\mathcal E'_W(\a)*\mathcal A\mu$ to $\mathcal E'_W(\a)$ takes bounded sets to bounded sets.

Now suppose that $\widetilde\mu$ is not slowly decreasing. All we need is to produce a sequence $\{E_j\}$ contradicting the claim, i.e.
\begin{enumerate}
	\item $\{E_j\}$ is an unbounded set in $\mathcal E'_W(\a)$;
	\item $\{E_j*\mathcal A\mu\}$ is a bounded set in $\mathcal E'_W(\a)$.
\end{enumerate}
For this, we will follow Ehrenpreis' proof in \cite{Ehrenpreis1960}, Theorem 2.2 and adapt it to the $W$-invariant situation.

All of our analysis  shifts at this point to $\a^*$ and $\a^*_\cc$, so for simplicity we will identify these spaces with $\rr^n$ and $\cc^n$, respectively. Moreover, we will not use any special properties of $W$, so we'll just assume that it is a finite subgroup of $\text O(n)$.

Since $\widetilde\mu=(\mathcal A\mu)^*$ is not slowly decreasing, there is a sequence $\{\xi_j\}$ of points in $\a^*=\rr^n$, such that for each $j$ and for all $\xi\in\a^*=\rr^n$ satisfying $\|\xi-\xi_j\|<2j\,\log(2+\|\xi_j\|)$,
\begin{equation}\label{E:mu-estimate}
|\widetilde\mu(\xi)|\leq (2j+\|\xi_j\|)^{-2j}.
\end{equation}
Notice that since $\widetilde\mu$ is $W$-invariant, the same condition holds if we replace $\xi_j$ by $\sigma\cdot\xi_j$, for any $\sigma\in W$. The sequence $\{\xi_j\}$ necessarily satisfies $\|\xi_j\|\to\infty$ as $j\to\infty$.

Now for each $j=1,2,\ldots$, define the function $h_j$ on $\cc$ by
$$
h_j(z)=\left(\frac{\sin(\pi z/j)}{(\pi z/j)}\right)^{2j}.
$$
The functions $h_j$ satisfy the following properties:
\begin{enumerate}
	\item All $h_j$ are entire functions in $\cc$ of exponential type $2\pi$.	
		Actually, we have a uniform estimation for $\{h_j\}$: for all $j$ and all $z\in\cc$,
		\begin{equation}\label{hj:unif-exp}
			|h_j(z)|\leq e^{2\pi|z|}.
		\end{equation}
	\item $h_j(0)=1$.
	\item $0\leq h_j(x)\leq 1$ for all $x\in\rr$.
	\item $h_j(x)\leq \pi^{-2j}$ for $x\in\rr$, $|x|\geq j$.
\end{enumerate}
Properties (1)-(4) are quite straightforward, and
we justify the uniform exponential estimate (1) as follows:

Denote $w=\pi z/j\in\cc$ temporarily. For all $j\geq 1$ and all $z\in\cc^\times$, 
$$ 
	|h_j(z)| = \left|\frac{\sin w}{w}\right|^{2j} =  \left|\frac{\sin w}{w}\right|^{\frac{2\pi|z|}{|w|}} = \left(\left|\frac{\sin w}{w}\right|^\frac{1}{|w|}\right)^{2\pi|z|} := e^{2\pi|z| F(w)} ,	
$$
where 
\begin{align*}
F(w) &= \log \left|\frac{\sin w}{w}\right|^{\frac{1}{|w|}}	\\
&= |w|^{-1}\log \left| 1 - \frac{w^2}{3!} + \frac{w^4}{5!} - \cdots \right|	\\
&\leq |w|^{-1}\log \left( 1 + \frac{|w|^2}{3!} + \frac{|w|^4}{5!} + \cdots \right) \\
&\leq \frac{|w|}{3!} + \frac{|w|^3}{5!} + \cdots \\
&= \sinh (|w|) - |w|.
\end{align*}
When $|w|\leq 1$, since $\sinh(t)-t$ is increasing, then $F(w)\leq \sinh (1)-1<1$ and hence $|h_j(z)|\leq e^{2\pi|z|}$.

On the other hand, when $|w|>1$, using the fact $|\sin w|\leq e^{|\text{Im}\, w|}$ we obtain
\begin{align*}
|h_j(z)| &= \left|\frac{\sin w}{w}\right|^{2j} 	\\
&\leq |w|^{-2j}\left(e^{|\text{Im}\, w|}\right)^{2j}	\\
&\leq e^{2j\, |\text{Im}\,w |}	\\
&= e^{2\pi\, |\text{Im}\,z|}.
\end{align*}
Combining the two cases above, we have shown \eqref{hj:unif-exp}.

Next, let us define the functions $H_j$ on $\cc^n$ by
$$
	H_j(\zeta)=h_j(\zeta^{(1)})\,h_j(\zeta^{(2)})\cdots\,h_j(\zeta^{(n)}),\quad \text{for } \zeta = (\zeta^{(1)},\ldots,\zeta^{(n)})\in\cc^n.
$$
Then $\{H_j\}$ has the following properties:
\begin{enumerate}
	\item[($1'$)] All $H_j$ are entire functions in $\cc^n$ of exponential type $2\pi\sqrt n$.
		To be more precise, for all $j$ and all $\zeta\in\cn$,
		\begin{equation}\label{Hj:unif-exp}
			|H_j(\zeta)|\leq e^{2\pi\sqrt{n}\,\|\zeta\|}.
		\end{equation}
	\item[($2'$)] $H_j(0)=1$.
	\item[($3'$)] $0\leq H_j(\xi)\leq 1$ for all $\xi\in\rr^n$.
	\item[($4'$)] $H_j(\xi)\leq \pi^{-2j}$ if $\xi\in\rr^n$ and at least one coordinate of $\xi$ is $\geq j$.
\end{enumerate}

Properties $(1')$-$(4')$ follow directly from properties (1)-(4) of $\{h_j\}$, just noting that to show \eqref{Hj:unif-exp} we use the elementary inequality
$$
	|\zeta^{(1)}|+|\zeta^{(2)}|+\cdots+|\zeta^{(n)}|\leq \sqrt{n}\,\|\zeta\|.
$$


For each $j$ and each $\sigma\in W$, we next define the function $F_j^\sigma$ on $\cc^n$ by
\begin{equation}
	F_j^\sigma(\zeta)=e^k\,H_k(\sqrt n(\zeta-\sigma\cdot\xi_j)),
\end{equation}
where $k$ is the greatest integer no more than $2j\log (2+\|\xi_j\|)$.
Then the $F_j^\sigma$ satisfy the following properties:
\begin{enumerate}
	\item[($1''$)] All $F_j^\sigma$ are entire functions of exponential type $2n\pi$.
		Precisely, for all $\zeta\in\cc^n$ we have
		\begin{equation}\label{Fj:exp}
			|F_j^\sigma(\zeta)| \leq C_j\, e^{2\pi n\|\zeta\|},
		\end{equation}
		where $C_j = (2+\|\xi_j\|)^{2j} e^{2\pi n\|\xi_j\|}$.
	\item[($2''$)] $F_j^\sigma(\sigma\cdot\xi_j)\geq e^{-1} (2+\|\xi_j\|)^{2j}$.
	\item[($3''$)] $0\leq F_j^\sigma(\xi)\leq (2+\|\xi_j\|)^{2j}$ for all $\xi\in\rr^n$.
	\item[($4''$)] $F_j^\sigma(\xi)\leq 1$ for all $\xi\in\rr^n$ such that $\|\xi-\sigma\cdot \xi_j\|\geq 2j\log(2+\|\xi_j\|)$.
\end{enumerate}

Again, ($1''$)-($4''$) follow from properties ($1'$)-($4'$) respectively. We only explain ($4''$) a bit.

Suppose $\|\xi-\sigma\cdot\xi_j\|\geq 2j\,\log(2+\|\xi_j\|)$  as in ($4''$), then $$\|\sqrt n(\xi-\sigma\cdot \xi_j)\|\geq \sqrt n\, 2j\log (2+\|\xi_j\|)\geq \sqrt{n}\,k$$
which means at least one of the coordinates of $\sqrt n\,(\xi-\sigma\cdot\xi_j)$ has absolute value $\geq k$. Thus, we get ($4''$) from ($4'$) directly:
\begin{align*}
	F_j^\sigma(\xi) &= e^k\,H_k(\sqrt n(\xi-\sigma\cdot\xi_j))	\\
	&\leq e^k \pi^{-2k} \leq 1.
\end{align*}


For each $j$ we now set
\begin{equation}\label{E:Fj-def}
F_j(\zeta) = \frac{1}{|W|} \sum_{\sigma\in W} F_j^\sigma(\zeta),\qquad \zeta\in \cc^n.
\end{equation}
Then for each fixed $j$, $F_j$ is a $W$-invariant entire function on $\cc^n$ of exponential type $2n\pi$, which (by ($3''$)) is bounded (with bound depending on $j$) and nonnegative on $\rr^n$. From ($2''$), we have 
\begin{equation}\label{E:fj}
F_j(\xi_j)\geq e^{-1} (2+\|\xi_j\|)^{2j}.
\end{equation}

For each $j$, consider the unique distribution $E_j\in\mathcal E'_W(\rr^n)$ such that $E_j^*=F_j$. Theorem \ref{T:PW-boundedness} and the inequality \eqref{E:fj} show that the sequence $\{E_j\}$ is not bounded in $\mathcal E'_W(\rr^n)$.

We will prove, on the other hand, that the sequence $\{E_j*\mathcal A\mu\}$ is bounded in $\mathcal E'_W(\rr^n)$. For this, we fix $j$ and consider any  $\xi\in\rr^n$ such that $\|\xi-\sigma\cdot\xi_j\|\geq 2j\,\log (2+\|\xi_j\|)$ for all $\sigma\in W$. Then Property ($4''$) shows that
\begin{equation}\label{E:PW2}
|\widetilde\mu(\xi)\,F_j(\xi)|\leq |\widetilde\mu(\xi)|.
\end{equation}

Next consider any $\xi\in\rr^n$ such that $\|\xi-\sigma\cdot\xi_j\|<2j\,\log (2+\|\xi_j\|)$ for some $\sigma\in W$. Then Property ($3''$), together 
with the inequality \eqref{E:mu-estimate}, with $\xi_j$ replaced by $\sigma\cdot\xi_j$, show that
\begin{equation}\label{E:PW3}
|\widetilde\mu(\xi)\,F_j(\xi)|\leq 1.
\end{equation}

The estimates \eqref{E:PW2} and \eqref{E:PW3} show that for all $\xi\in\rr^n$ and all $j$, we have
\begin{equation}\label{E:PW4}
|\widetilde\mu(\xi)\,F_j(\xi)|\leq |\widetilde\mu(\xi)|+1.
\end{equation}

Now the entire function $\widetilde\mu$ is of some exponential type $A$, so the functions in the sequence $\{\widetilde\mu\,F_j\}$ are all of exponential type $2n\pi+A$.
Moreover, since $\widetilde\mu$ is polynomially increasing in $\rr^n$, the inequality \eqref{E:PW4} implies that there is a constant $C$ and an nonnegative integer $N$ such that
$$
|\widetilde\mu(\xi)\,F_j(\xi)|\leq C(1+\|\xi\|)^N
$$
for all $\xi\in\rr^n$ and all $j$. 

Recall that $E_j^*=F_j$, hence $(E_j*\mathcal A\mu)^* = \widetilde\mu F_j$. 
Theorem \eqref{T:PW-boundedness} therefore implies that the sequence $\{E_j*\mathcal A\mu\}$ is bounded in $\mathcal E'_W(\rr^n)$. 

However, we have already shown $\{E_j\}$ is unbounded in $\mathcal E'_W(\rr^n)$ by \eqref{E:fj}. This contradicts our earlier conclusion that the map $T*\mathcal A\mu\to T$ from $\mathcal E'_W(\a)*\mathcal A\mu$ to $\mathcal E'_W(\a)$ takes bounded sets to bounded sets, and completes the proof of Theorem \ref{T:fundamentalsoln1}.
\end{proof}

We call $\mu\in\mathcal E_K'(X)$ \emph{invertible} if its spherical Fourier transform $\widetilde\mu(\lambda)$ is slowly decreasing. Theorem \ref{T:fundamentalsoln1} then says that $\mu$ is invertible if and only if $c_\mu$ has a fundamental solution. Actually, we can generalize the theorem slightly as follows.

\begin{theorem}
A distribution $\mu\in\mathcal E_K'(X)$ is invertible if and only if there is an invertible distribution $S\in\mathcal E'_K(X)$ and a distribution $T\in \mathcal D'(X)$ such that $T*\mu=S$.
\end{theorem}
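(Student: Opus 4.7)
The forward direction is immediate: if $\mu$ is invertible, take $T=\delta_o$ and $S=\mu$, so that $T*\mu=\mu=S$ with $S$ invertible.

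For the converse, suppose $T\in \mathcal D'(X)$ and an invertible $S\in\mathcal E'_K(X)$ satisfy $T*\mu=S$. First, averaging both sides over left $K$-translations and using the $K$-invariance of $\mu$ (and of $S$) yields $T^\natural * \mu = S$, so one may assume $T\in\mathcal D'_K(X)$. My plan is then to transfer the equation to $\a$ via the extended Abel transform and reduce the invertibility of $\widetilde\mu$ to an application of Lemma \ref{T:slowdecrease1}(3). Concretely, let $\mathcal A^\flat\colon\mathcal D'_K(X)\to \mathcal D'_W(\a)$ denote the inverse of the bijection $\mathcal T$ appearing in the proof of Theorem \ref{T:distribution-surjectivity}; it restricts to the ordinary Abel transform on $\mathcal E'_K(X)$. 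Combining the commutative diagram \eqref{E:commdiag3}, the commutativity of convolution among $K$-invariant distributions, and the easily verified identity $\mathcal A\widecheck\mu=(\mathcal A\mu)^\vee$ (both sides have Euclidean Fourier transform $\lambda\mapsto\widetilde\mu(-\lambda)$), one obtains the intertwining
\begin{equation*}
\mathcal A^\flat(F*\mu) \;=\; \mathcal A^\flat F * \mathcal A\mu,\qquad F\in\mathcal D'_K(X).
\end{equation*}
Setting $\tau=\mathcal A^\flat T\in\mathcal D'_W(\a)$, the equation $T*\mu=S$ becomes $\tau*\mathcal A\mu=\mathcal A S$ in $\mathcal D'_W(\a)$, where $\mathcal A S$ is invertible in $\mathcal E'_W(\a)$ because $\widetilde S=(\mathcal A S)^*$ is slowly decreasing.

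To deduce that $\mathcal A\mu$ itself is invertible (equivalent to $\widetilde\mu=(\mathcal A\mu)^*$ being slowly decreasing, hence to $\mu$ being invertible), I apply Lemma \ref{T:slowdecrease1}(3). Let $Q\in\mathcal E'(\a)$ satisfy $\mathcal A\mu*Q\in C^\infty(\a)$. Since $\mathcal A\mu$ and $Q$ are both compactly supported, $\mathcal A\mu*Q\in\mathcal D(\a)$. Associativity of convolution (valid since among the three factors $\tau,\mathcal A\mu,Q$ two have compact support) then gives
\begin{equation*}
\mathcal A S*Q \;=\; (\tau*\mathcal A\mu)*Q \;=\; \tau*(\mathcal A\mu*Q),
\end{equation*}
and the right-hand side is the convolution of a distribution with a test function, hence smooth. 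Thus $\mathcal A S*Q\in C^\infty(\a)$, and the invertibility of $\mathcal A S$ forces $Q\in C^\infty(\a)$ via Lemma \ref{T:slowdecrease1}(3). A second application of the same lemma yields that $\mathcal A\mu$ is invertible.

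The main obstacle I anticipate is cleanly establishing the intertwining $\mathcal A^\flat(F*\mu)=\mathcal A^\flat F*\mathcal A\mu$ for $F\in\mathcal D'_K(X)$ that need not be compactly supported: diagram \eqref{E:commdiag3} naturally involves the reflections $\widecheck\mu$ and $(\mathcal A\mu)^\vee$, so one must carefully verify both $\mathcal A\widecheck\mu=(\mathcal A\mu)^\vee$ and the fact that $\mathcal A^\flat$ agrees with the ordinary Abel transform on the compactly supported subspace $\mathcal E'_K(X)$. Once that identity is in place, the rest of the argument via Lemma \ref{T:slowdecrease1}(3) is short and essentially formal.
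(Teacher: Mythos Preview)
Your forward direction is fine (and in fact simpler than the paper's, which invokes Theorem~\ref{T:distribution-surjectivity}).

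The converse, however, has a genuine gap at exactly the point you flagged as the ``main obstacle'': the assertion that $\mathcal A^\flat:=\mathcal T^{-1}$ restricts to the ordinary Abel transform on $\mathcal E'_K(X)$ is \emph{false}. By the computation in the proof of Proposition~\ref{T:bijection1}, the adjoint $T^*$ equals $\mathcal A$ on $\mathcal E'_K(X)$; hence $\mathcal A^\flat|_{\mathcal E'_K(X)}=\mathcal A$ would force $\mathcal T\circ T^*=\text{id}$, i.e.\ $T\circ\mathcal A=\text{id}$ on $\mathcal D_K(X)$. But a direct calculation using the definition of $T$ and formula~\eqref{E:dualtransform} gives
\[
T(\mathcal A f)(x)=\int_B e^{2\rho(A(x,b))}\,Rf(b,\exp A(x,b))\,db=R^*Rf(x),
\]
and $R^*R$ is not the identity (it is convolution by a nontrivial $K$-invariant kernel). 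Concretely, $\mathcal T(\delta_0)(f)=\mathcal A f(0)=\int_N f(n\cdot o)\,dn$, which is integration over the horocycle $N\cdot o$, not $\delta_o$; so $\mathcal A^\flat(\delta_o)\neq\delta_0=\mathcal A\delta_o$. In fact $\mathcal A^\flat S$ is typically \emph{not} compactly supported (its ``Fourier transform'' carries the Plancherel density $|c(\lambda)|^{-2}$), so Lemma~\ref{T:slowdecrease1} cannot be applied to it, and your final implication ``$\mathcal A^\flat S*Q\in C^\infty\Rightarrow Q\in C^\infty$'' is unjustified.

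The paper's proof sidesteps this entirely: rather than the singular--support criterion, it uses the boundedness criterion isolated in the proof of Theorem~\ref{T:fundamentalsoln1}. Given $E\subset\mathcal E'_K(X)$ with $E*\mu$ bounded, one convolves on the left by $T$ (continuous $\mathcal E'_K(X)\to\mathcal D'_K(X)$) and uses commutativity of $K$-invariant convolution to get $E*S$ bounded; since the supports of $E$ are uniformly bounded by Lemma~\ref{T:support-result}, $E*S$ is bounded in $\mathcal E'_K(X)$. Repeating the argument with a fundamental solution of $c_S$ (which exists because $S$ is invertible) gives $E$ bounded. No extension of the Abel transform to non-compactly supported distributions is needed. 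The Remark preceding the proof in the paper warns against exactly the kind of manipulation your approach attempts.
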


\textit{Remark.} Theorem \ref{T:fundamentalsoln1} corresponds of course to the case $S=\delta_o$. Note that we cannot just convolve both sides of $T*\mu=S$ with a fundamental solution to $c_S$, since the resulting left hand side would be a convolution of three distributions, two of which may not have compact support.

\begin{proof}
The ``only if'' part follows immediately from Theorem \ref{T:distribution-surjectivity}.

Conversely, suppose that $S$ and $T$ exist. Replacing $T$ by $T^\natural$, we can assume that $T\in\mathcal D_K'(X)$. 

  We claim that $\mu$ satisfies the claim in the proof of Theorem \ref{T:fundamentalsoln1}; that is to say, we claim that the map $\Psi*\mu\to\Psi$ from $c_\mu(\mathcal E_K'(X))$ to $\mathcal E_K'(X)$ takes bounded sets to bounded sets. The proof of Theorem \ref{T:fundamentalsoln1} will then show that $\mu$ is invertible.

So suppose that $E\subset \mathcal E_K'(X)$ and that $E*\mu$ is bounded in $\mathcal E'_K(X)$.   Note that Lemma \ref{T:support-result} implies that the elements of $E$ all have support inside the same compact subset of $X$.
Since left convolution by $T$ is continuous from $\mathcal E_K'(X)$ to $\mathcal D'_K(X)$, we see that $T*(E*\mu)$ is a bounded subset of $\mathcal D_K'(X)$.

But the convolution of $K$-invariant distributions in $X$ is commutative, so we have
$$
T*E*\mu=E*T*\mu=E*S
$$
Thus $E*S$ is a bounded subset of $\mathcal D'_K(X)$ consisting of distributions which all have support in the same compact set, so it follows that $E*S$ is bounded in $\mathcal E_K'(X)$.

Since $S$ is invertible, Theorem \ref{T:fundamentalsoln1} shows that there is a $\Phi\in\mathcal D'_K(X)$ such that $\Phi*S=\delta_o$. If we apply the exact same argument as above with $\mu$ replaced by $S$ and $T$ replaced by $\Phi$, we conclude that $E*\delta_o=E$ is bounded in $\mathcal E_K'(X)$. Thus $\mu$ satisfies the claim, and it follows that $\mu$ is invertible.
\end{proof}

\bibliographystyle{alpha}
\bibliography{Gonzalez}
%
%

%
%
%
%
%
%
%
\end{document}